\newtheorem{theorem}{Theorem}
\newtheorem{lemma}{Lemma}
\newtheorem{proposition}{Proposition}
\newtheorem{corollary}{Corollary}
\newtheoremstyle{neosn}{0.5\topsep}{0.5\topsep}{\rm}{}{\bf}{.}{ }{\thmname{#1}\thmnumber{ #2}\thmnote{ {\mdseries#3}}}
\theoremstyle{neosn}
\newtheorem{definition}{Definition}
\newtheorem{example}{Example}
\newcommand{\ad}{\,\mathrm{ad}\,}
\renewenvironment{proof}{\noindent \textbf{Proof.}}{$\blacksquare$}
\newcommand{\GL}{\,\mathrm{GL}\,}
\newcommand{\SL}{\,\mathrm{SL}\,}
\newcommand{\PGL}{\,\mathrm{PGL}\,}
\newcommand{\PSL}{\,\mathrm{PSL}\,}
\newcommand{\UT}{\,\mathrm{UT}\,}
\newcommand{\Exp}{\,\mathrm{Exp}\,}
\newcommand{\Rad}{\,\mathrm{Rad}\,}
\begin{document}

\begin{center}

{\Large {\bf Regular bi-interpretability of Chevalley groups  \\

\medskip

over local rings}}

\bigskip
 {\large \bf Elena~Bunina}

\medskip

 {\large \bf{Bar-Ilan University}}

\end{center}
\bigskip

 \begin{center}

{\large{\bf Abstract}}

\end{center}

In this paper we prove that if  $G(R)=G_\pi (\Phi,R)$ $(E(R)=E_{\pi}(\Phi, R))$ is an (elementary)  Chevalley group of rank $> 1$, $R$ is a local ring  (with $\frac{1}{2}$ for the root systems ${\mathbf A}_2, {\mathbf B}_l, {\mathbf C}_l, {\mathbf F}_4, {\mathbf G}_2$ and with  $\frac{1}{3}$ for  ${\mathbf G}_{2})$, then the group $G(R)$ (or $(E(R)$) is regularly bi-interpretable with the ring~$R$.
As a consequence of this theorem, we show that the class of all Chevalley groups over local rings (with the listed restrictions) is elementary definable, i.\,e., if for an arbitrary group~$H$ we have $H\equiv G_\pi(\Phi, R)$, than there exists a ring $R'\equiv R$ such that $H\cong G_\pi(\Phi,R')$.

\bigskip

\section{Introduction, history and definitions}\leavevmode

\subsection{Elementary equivalence}\leavevmode

Two models  ${\mathcal U}$ and ${\mathcal U}'$ of the same first
order language~$\mathcal L$ (for example, two groups or two rings)
are called \emph{elementarily equivalent}, if every sentence~$\varphi$
of the language~$\mathcal L$ holds in~${\mathcal U}$ if and only if
it holds in~${\mathcal U}'$.  Any two finite models of the same
language are elementarily equivalent if and only if they are
isomorphic. Any two isomorphic models are elementarily equivalent, but
for infinite models the converse fact is not true. For example, the
field $\mathbb C$ of complex numbers and the field $\overline
{\mathbb Q}$ of algebraic numbers are elementarily equivalent, but not
isomorphic as they have different cardinalities (for more detailed examples
see, for example,~\cite{Marker}). 

Tarski and Maltsev pushed forward a problem of describing groups and rings (in some natural classes) that are elementarily equivalent. There were obtained several complete results for some classes of groups and rings: for example, two algebraically closed fields are elementarily equivalent if and only if they have the same characteristics (classical result); two Abelian groups are elementarily equivalent if and only if they have the same special ``characteristic numbers'' (like $\Exp A$, $\dim p^nA[p]$, etc.), which must be either the same finite numbers or infinity (Szmielew, \cite{Shmelew}); similar results with invariants  were obtained  for Boolean rings (Ershov--Tarski, \cite{Boolean}),  also there were obtained  several other classification results. 
An outstanding result was the answer for old questions that were raised by A.\,Tarski around 1945:
 all non-abelian free groups are elementary equivalent (O.\,Kharlampovich, A.\,Myasnikov \cite{Kharlamp-Myasikov}; Z.\,Sela~\cite{Sela}),.
Certainly, any new such ``pure classification result'' is a rather rare mathematical phenomenon.

\subsection{Maltsev-type theorems for linear groups}\leavevmode

Another way of studying elementary equivalence of algebraic models is to establish a connection between derivative models (linear groups over rings and fields, automorphism groups and endomorphism rigs of different structures, etc) and initial models (and ``parameters'') used for the construction.
First results in this field were obtained
by A.I.~Maltsev in 1961 in~\cite{Maltsev}. He proved that the groups
$\mathcal G_n(K_1)$ and $\mathcal G_m(K_2)$ (where $G=\GL,\SL,\PGL, \PSL$, $n,m\geqslant
3$, $K_1$,~$K_2$ are fields of characteristics~$0$) are elementarily
equivalent if and only if
 $m=n$ and the fields  $K_1$~and~$K_2$ are elementarily equivalent.

In 1961--1971 H.J.\,Keisler (\cite{Keisler}) and S.\,Shelah (\cite{Shelah}) proved the next Isomorphism theorem:

\begin{theorem}
Two models  $\mathcal U_1$ and $\mathcal U_2$ are elementary equivalent if and only if
there exists an ultrafilter  $\mathcal F$ such that their ultrapowers coincide:
$$
\prod_{\mathcal F} \mathcal U_1\cong \prod_{\mathcal F} \mathcal U_2.
$$
\end{theorem}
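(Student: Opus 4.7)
The plan is to prove the two directions separately, relying on Łoś's theorem for the easy direction and on a saturation argument for the hard direction.

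For the easy direction, assume $\prod_{\mathcal F} \mathcal U_1 \cong \prod_{\mathcal F} \mathcal U_2$ for some ultrafilter $\mathcal F$. By Łoś's theorem, every sentence of $\mathcal L$ holds in $\mathcal U_i$ iff it holds in $\prod_{\mathcal F} \mathcal U_i$ (since $\{j : \mathcal U_i \models \varphi\}$ is either the whole index set or empty). Combined with the fact that isomorphic structures satisfy the same sentences, we get $\mathcal U_1 \equiv \mathcal U_2$ immediately. This direction is routine.

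For the hard direction, assume $\mathcal U_1 \equiv \mathcal U_2$. The strategy is to produce ultrapowers that are (i) of the same cardinality and (ii) sufficiently saturated, and then invoke the classical back-and-forth fact that elementarily equivalent, $\kappa$-saturated structures of cardinality $\kappa$ are isomorphic. Concretely, fix a cardinal $\kappa$ with $\kappa > |\mathcal L| + \aleph_0$ and an index set $I$ of cardinality $\kappa$, and take $\mathcal F$ to be a countably incomplete $\kappa^+$-good ultrafilter on $I$. Form the ultrapowers $\mathcal V_i := \prod_{\mathcal F} \mathcal U_i$. Since $\mathcal U_1 \equiv \mathcal U_2$, Łoś gives $\mathcal V_1 \equiv \mathcal V_2$. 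Moreover, a theorem of Keisler states that ultrapowers by countably incomplete $\kappa^+$-good ultrafilters are $\kappa^+$-saturated, so both $\mathcal V_i$ are $\kappa^+$-saturated. After passing to further ultrapowers or using a standard cardinality computation, one arranges $|\mathcal V_1| = |\mathcal V_2| = \kappa^+$, and the back-and-forth construction then yields an isomorphism $\mathcal V_1 \cong \mathcal V_2$.

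The main obstacle is the existence of countably incomplete $\kappa^+$-good ultrafilters in ZFC: Keisler's original proof of the theorem assumed GCH precisely in order to sidestep this. Shelah's contribution was to construct such ultrafilters unconditionally, by an inductive transfinite construction of length $2^\kappa$ that at each stage extends a filter base so as to handle one more monotone function into $\mathcal F$, carefully bookkeeping the goodness requirements. I would state this construction as the key black box, since redoing it in detail is the bulk of the argument and orthogonal to the model-theoretic core of the theorem.

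A secondary, more minor issue is the cardinality management: the ultrapower $\prod_{\mathcal F} \mathcal U_i$ has cardinality $|\mathcal U_i|^\kappa / {\sim_{\mathcal F}}$, which under no cardinal arithmetic assumption equals $\kappa^+$ exactly. This is handled by iterating the ultrapower construction, or by replacing ``isomorphism of $\kappa$-saturated models of cardinality $\kappa$'' by the equivalent statement for sufficiently saturated models of equal cardinality. Once the saturation and cardinality are aligned, the back-and-forth isomorphism is a standard exercise.
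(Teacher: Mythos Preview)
The paper does not prove this statement at all: it is quoted as the classical Keisler--Shelah Isomorphism Theorem, with references to \cite{Keisler} and \cite{Shelah}, and is used only as a black box elsewhere in the paper. So there is no ``paper's own proof'' to compare your attempt against.

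That said, your outline is essentially the standard textbook proof (as in Chang--Keisler). The easy direction via \L o\'s is fine. For the hard direction you have correctly isolated the three ingredients: (i) ultrapowers by countably incomplete $\kappa^+$-good ultrafilters are $\kappa^+$-saturated, (ii) Shelah's ZFC construction of such ultrafilters, and (iii) a back-and-forth. The one soft spot is your cardinality bookkeeping: without GCH you do \emph{not} arrange $|\mathcal V_i|=\kappa^+$. What actually happens is that a $\kappa^+$-good countably incomplete ultrafilter on~$\kappa$ is $\kappa$-regular, so for infinite $\mathcal U_i$ with $|\mathcal U_i|\le 2^\kappa$ one gets $|\mathcal V_i|=|\mathcal U_i|^\kappa=2^\kappa$ exactly; the two ultrapowers are then $\kappa^+$-saturated of the same cardinality $2^\kappa$, and the isomorphism follows from the uniqueness of \emph{special} models of a given complete theory and cardinality, rather than from the ``$\kappa$-saturated of cardinality~$\kappa$'' back-and-forth you invoke. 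You flagged this step as needing care, so this is an imprecision rather than a real gap; replacing ``arrange $|\mathcal V_i|=\kappa^+$'' by the special-model argument fixes it without any iteration of ultrapowers.
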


Using this Isomorphism theorem K.I.~Beidar and A.V.~Mikhalev in 1992  (see~\cite{BeidarMikhalev}) found the general approach to this type of theorems  and generalised Maltsev theorem for the case when  $K_1$ and $K_2$ are skewfields and prime associative rings.  This approach for the groups $\GL_n$ was generalised in~\cite{Bunina_Bragin}  in the following result:

\medskip

\begin{theorem}\label{finite number idempotents}
Let $R_1$ and $R_2$ be associative rings with~$1$
$(1/2)$ with finite number of central idempotents and $m,n\geqslant 4$ $(m,n\geqslant 3)$. Then $\GL_m(R_1)\equiv \GL_n(R_2)$ if and only if there exist  central idempotents $e\in R$  and $f \in S$ such  that $e M_m(R)\equiv f M_n(S)$ and $(1-e)M_m(R)\equiv (1-f) M_n(S)^{op}$.
\end{theorem}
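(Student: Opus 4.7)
My plan is to treat the two directions separately: the $(\Leftarrow)$ direction will reduce to a short definability argument, while the $(\Rightarrow)$ direction will go through the Keisler--Shelah ultrapower theorem and a structural classification of isomorphisms between general linear groups.

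For the $(\Leftarrow)$ direction I will combine three observations. First, a central idempotent $e \in R_1$ gives a ring decomposition $M_m(R_1) \cong e M_m(R_1) \oplus (1-e) M_m(R_1)$ and hence a group decomposition $\GL_m(R_1) \cong \GL(e M_m(R_1)) \times \GL((1-e) M_m(R_1))$; analogously for $R_2$ with $f$. Second, the unit group is uniformly definable in the ring language, so any elementary equivalence of unital rings lifts to elementary equivalence of their unit groups. Third, inversion induces a group isomorphism $\GL(A) \cong \GL(A^{op})$ for any unital ring $A$. Chaining these will give
\[
\GL_m(R_1) \equiv \GL(f M_n(R_2)) \times \GL((1-f) M_n(R_2)^{op}) \cong \GL_n(R_2).
\]

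For the $(\Rightarrow)$ direction I first apply Keisler--Shelah to obtain an ultrafilter $\mathcal{F}$ with $\GL_m(R_1)^{\mathcal{F}} \cong \GL_n(R_2)^{\mathcal{F}}$, and use first-order definability of the $\GL_m$ construction to rewrite this as $\GL_m(R_1^{\mathcal{F}}) \cong \GL_n(R_2^{\mathcal{F}})$. Since ``there are exactly $k$ central idempotents'' is a first-order condition, $R_i^{\mathcal{F}}$ has the same finite set of central idempotents as $R_i$, realized by constant sequences. I then invoke a Beidar--Mikhalev style classification: any isomorphism between $\GL_m$ and $\GL_n$ over rings with finitely many central idempotents (and $m,n$ satisfying our hypotheses) splits along a pair of complementary central idempotents into a ring-isomorphism-induced component and an anti-isomorphism-induced component. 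Applied to the ultrapower isomorphism, this produces the required $e \in R_1$ and $f \in R_2$ together with ring isomorphisms of the four matrix-ring components; by the easy direction of Keisler--Shelah these descend to the elementary equivalences $e M_m(R_1) \equiv f M_n(R_2)$ and $(1-e) M_m(R_1) \equiv (1-f) M_n(R_2)^{op}$.

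The principal obstacle is the structural classification of isomorphisms $\GL_m(R) \cong \GL_n(S)$ when an opposite-ring twist on part of the rings is permitted: this is exactly where the rank hypotheses $m,n \geq 4$ (or $\geq 3$ with $\tfrac{1}{2}$) enter, to rule out low-rank exceptional isomorphisms, and it is the step requiring genuine work rather than model-theoretic bookkeeping. Everything else in the proof is a routine combination of Keisler--Shelah with definability of the unit-group functor.
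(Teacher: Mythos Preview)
The paper does not actually contain a proof of this theorem: it is quoted as a background result from \cite{Bunina_Bragin}, and the paper only indicates that it was obtained via the Beidar--Mikhalev approach of combining the Keisler--Shelah isomorphism theorem with a structural description of isomorphisms of the relevant linear groups. Your proposal is precisely this approach, and the outline is correct in both directions.

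Two remarks on the details. First, in the $(\Rightarrow)$ direction you should be explicit that $\GL_m(R)$ is the unit group of $M_m(R)$ (there is no determinant over a general associative ring), so that ``$\GL_m$ commutes with ultrapowers'' follows from the first-order interpretability of $M_m(R)$ and of the unit group in~$R$. Second, your ``principal obstacle'' is exactly the substantive content of the cited paper: the classification of isomorphisms $\GL_m(R)\cong \GL_n(S)$ for associative rings with finitely many central idempotents (due in this generality to Golubchik--Mikhalev and related work) is what produces the idempotents $e,f$ and the ring/anti-ring isomorphisms on the pieces. You correctly identify that everything else is model-theoretic bookkeeping; just be aware that this classification step is a theorem to be cited, not a lemma one proves along the way.
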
 

\medskip

Continuation of investigations in this field were the papers of E.I.~Bunina 1998--2010 (see~\cite{Bunina_intro1}, \cite{Bunina_intro3}, \cite{Bunina-local}, \cite{Bunina_intro2}), where the results of A.I.~Maltsev were extended for unitary linear groups over skewfields and associative rings with involution, and also for Chevalley groups over fields and local rings. The important for this paper is the following theorem (see~\cite{Bunina-local}):

\medskip

\begin{theorem}\label{th_Chevalley_Local}
Let
 $G=G_\pi (\Phi,R)$ and $G'=G_{\pi'}(\Phi',R')$
\emph{(}or $E_\pi (\Phi,R)$ and $E_{\pi'}(\Phi',R'))$ be two
\emph{(}elementary\emph{)} Chevalley groups over infinite local
rings $R$ and~$R'$ with two invertible (in the case of the root
system $G_2$ with three invertible), with indecomposable root
systems $\Phi,\Phi'$ of ranks $> 1$, with weight lattices $\Lambda$
and $\Lambda'$, respectively.
 Then the groups $G$ and $G'$ are elementarily equivalent if and only if the root systems  $\Phi$
 and $\Phi'$ are isomorphic, the rings $R$  and $R'$
are elementarily equivalent, the lattices $\Lambda$ and $\Lambda'$
coincide.
\end{theorem}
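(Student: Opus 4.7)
The plan is to reduce the elementary-equivalence statement to an isomorphism statement via the Keisler–Shelah ultrapower theorem stated earlier in the excerpt, and then invoke the known abstract isomorphism theorem for Chevalley groups over local rings.

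For the easy direction, I would first show that the (elementary) Chevalley group $G_\pi(\Phi,R)$ is \emph{uniformly interpretable} in the ring $R$, by a scheme of first-order formulas depending only on the root system $\Phi$ and the weight lattice $\Lambda$ (and not on $R$ itself). Concretely, fix a faithful representation of $G_\pi(\Phi,-)$ as a closed subgroup of $\GL_N$ cut out by polynomial equations in the matrix entries; these polynomials have coefficients in $\mathbb{Z}$ and the same shape for every ring $R$. Then the universe of $G_\pi(\Phi,R)$ is a definable subset of $R^{N^2}$ and the group operation is given by universal polynomials. Consequently, if $\Phi\cong\Phi'$, $\Lambda=\Lambda'$, and $R\equiv R'$, then applying these same formulas to $R$ and $R'$ immediately yields $G_\pi(\Phi,R)\equiv G_{\pi'}(\Phi',R')$. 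The same uniform interpretation also implies the commutation $\prod_{\mathcal F}G_\pi(\Phi,R)\cong G_\pi(\Phi,\prod_{\mathcal F}R)$ for every ultrafilter $\mathcal F$, which is used below.

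For the hard direction, suppose $G\equiv G'$. By the Keisler–Shelah Isomorphism theorem cited above, there is an ultrafilter $\mathcal F$ giving $\prod_{\mathcal F}G\cong\prod_{\mathcal F}G'$. Since ultrapowers of local rings are local and elementarily equivalent to the original ring, and since the invertibility of $2$ (or $3$ for $\mathbf G_2$) is expressible by a first-order sentence and thus passes to ultrapowers, the uniform-interpretation remark above turns this into an abstract isomorphism
$$G_\pi(\Phi,S)\;\cong\;G_{\pi'}(\Phi',S'),$$
where $S=\prod_{\mathcal F}R$ and $S'=\prod_{\mathcal F}R'$ are infinite local rings with the required invertibilities, satisfying $S\equiv R$ and $S'\equiv R'$.

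What remains is therefore a purely algebraic isomorphism theorem: any such isomorphism between Chevalley groups over infinite local rings (satisfying the listed invertibility hypotheses, with indecomposable root systems of rank $>1$) forces $\Phi\cong\Phi'$, $\Lambda=\Lambda'$, and $S\cong S'$. This is the main obstacle, and the proof would follow the Abe–Petechuk–Klyachko-style program: recover a split maximal torus $T(S)\subset G_\pi(\Phi,S)$ from the group-theoretic data (e.g.\ as a maximal subgroup of a suitable type, using the local hypothesis), decompose the group into root subgroups under the adjoint action of $T$, reconstruct the root system $\Phi$ combinatorially from the lattice of root subgroups, reconstruct the weight lattice $\Lambda$ from the character group of $T$, and finally reconstruct the ring $S$ as the parameter ring of a single root subgroup $x_\alpha(S)\cong(S,+)$, with the multiplicative structure recovered from the Chevalley commutator formulas together with the torus action. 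Once $S\cong S'$ is established, combined with $S\equiv R$ and $S'\equiv R'$ this yields $R\equiv R'$, completing the proof. The small-characteristic exclusions in the statement are exactly what is needed to keep the commutator formulas and the torus-eigenspace decomposition non-degenerate at every step of this reconstruction.
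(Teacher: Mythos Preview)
Your proposal is sound and, in broad outline, matches what the paper itself says about the proof: the present paper does not prove Theorem~\ref{th_Chevalley_Local} but cites it from \cite{Bunina-local}, noting only that it ``was proved partially using explicit first order formulas, partially with the usage of Keisler--Shelah Isomorphism theorem.'' Your scheme---uniform interpretation of $G_\pi(\Phi,-)$ in the ring for the easy direction, Keisler--Shelah plus an abstract isomorphism theorem for the hard direction---is precisely this combination.

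There is a slight difference in emphasis worth flagging. The original argument in \cite{Bunina-local} leans more on the ``explicit first order formulas'' side: rather than pushing the entire hard direction through a black-box isomorphism theorem for $G_\pi(\Phi,S)\cong G_{\pi'}(\Phi',S')$, it builds first-order formulas inside the group that distinguish the root system and weight lattice and that interpret the ring directly (via the maximal normal subgroup $E_J$, Gauss decomposition, and the root subgroups---the same machinery this paper later reuses for bi-interpretability). The Keisler--Shelah step is then applied more locally. Your route instead concentrates all the algebraic difficulty into a single isomorphism theorem applied to the ultrapowers; this is cleaner to state but requires the full strength of the Abe--Bunina automorphism/isomorphism classification for Chevalley groups over local rings, which you correctly identify as the main obstacle and only sketch. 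Both routes are legitimate and end up doing the same reconstruction work (torus, root subgroups, commutator formulas), just packaged differently.

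One small point to secure in your write-up: for the \emph{elementary} group $E_\pi(\Phi,R)$ the commutation $\prod_{\mathcal F}E_\pi(\Phi,R)\cong E_\pi(\Phi,\prod_{\mathcal F}R)$ does not follow from polynomial definability alone (since $E$ is a priori only a generated subgroup); you need the bounded Gauss decomposition over local rings (Proposition~\ref{gauss} here) to get a uniform first-order description, and you should say so explicitly.
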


\medskip

This last theorem was proved partially using explicit first order formulas, partially with the usage of Keisler--Shelah Isomorphism theorem.  In 2019 this result was almost generalised for Chevalley groups over arbitrary commutative rings (using the algebraic results from~\cite{Chevalley_main} on classification of all automorphisms of Chevalley groups over commutative rings and Keisler--Shelah Isomorphism theorem):

\medskip 

\begin{theorem}[E.\,Bunina, \cite{BunChevComm}]\label{th_Chevalley_Comm}
Let
 $G=G_{\ad} (\Phi,R)$ and $G'=G_{\ad}(\Phi',R')$
\emph{(}or $E_{\ad} (\Phi,R)$ and $E_{\ad'}(\Phi',R'))$ be two adjoint
\emph{(}elementary\emph{)} Chevalley groups  with indecomposable root
systems $\Phi,\Phi'$ of ranks $> 1$ over infinite commutative
rings $R$ and~$R'$  (for the cases of the roots systems $A_2$, $B_l$, $C_l$, $F_4$ with $1/2$
and for the system $G_2$ with $1/2$ and $1/3$).
 
Then the groups $G$ and $G'$ are elementarily equivalent if and only if the root systems  $\Phi$
 and $\Phi'$ are isomorphic and  the rings $R$  and $R'$
are elementarily equivalent.
\end{theorem}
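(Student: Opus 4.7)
The plan is to prove both implications using the Keisler--Shelah Isomorphism theorem (stated earlier) as the bridge between elementary equivalence and isomorphism, exploiting the fact that the adjoint Chevalley group functor $G_{\ad}(\Phi, -)$ commutes with ultraproducts of rings. This commutation follows from the fundamental theorem of ultraproducts: $G_{\ad}(\Phi, R)$ is definable inside $\GL_N(R)$ (for $N = \dim \mathfrak{g}_\Phi$) as the solution set of a finite system of polynomial equations in the matrix entries, so $\prod_{\mathcal F} G_{\ad}(\Phi, R) \cong G_{\ad}(\Phi, \prod_{\mathcal F} R)$ for every ultrafilter $\mathcal F$.

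For the ``if'' direction, assume $\Phi \cong \Phi'$ and $R \equiv R'$. By Keisler--Shelah there exists an ultrafilter $\mathcal F$ with $\prod_{\mathcal F} R \cong \prod_{\mathcal F} R'$. Combining the isomorphism of root systems with the commutation above on both sides produces an isomorphism $\prod_{\mathcal F} G_{\ad}(\Phi, R) \cong \prod_{\mathcal F} G_{\ad}(\Phi', R')$, and applying Keisler--Shelah in the reverse direction yields the elementary equivalence $G_{\ad}(\Phi, R) \equiv G_{\ad}(\Phi', R')$.

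For the ``only if'' direction, assume $G_{\ad}(\Phi, R) \equiv G_{\ad}(\Phi', R')$. Keisler--Shelah together with the ultraproduct commutation produces an \emph{abstract} group isomorphism $G_{\ad}(\Phi, \prod_{\mathcal F} R) \cong G_{\ad}(\Phi', \prod_{\mathcal F} R')$ between two adjoint Chevalley groups over commutative rings. Here one invokes the classification of isomorphisms from~\cite{Chevalley_main}: every such abstract isomorphism is \emph{standard}, that is, a composition of an isomorphism induced by a ring isomorphism of the base rings, a graph automorphism of the root system, and an inner automorphism. Reading off the ring-theoretic and combinatorial data forces $\Phi \cong \Phi'$ and $\prod_{\mathcal F} R \cong \prod_{\mathcal F} R'$, and a final application of Keisler--Shelah gives $R \equiv R'$.

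The main obstacle is the invocation of the classification theorem of~\cite{Chevalley_main}, a deep algebraic result without which one cannot pass from an abstract group isomorphism over ultraproducts to a ring isomorphism between them. Subsidiary points to verify are that the hypotheses (commutativity, infiniteness, and invertibility of $1/2$ or $1/3$ as appropriate) are first-order properties of the ring and therefore transfer to the ultraproducts automatically; and for the elementary subgroup $E_{\ad}(\Phi, R)$, one additionally needs a bounded-generation statement ensuring that $E$ is first-order definable and commutes with ultraproducts, after which the argument proceeds exactly as for $G_{\ad}$.
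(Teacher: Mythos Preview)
Your proposal is correct and follows exactly the approach the paper attributes to~\cite{BunChevComm}: the paper does not prove this theorem in detail here but explicitly states it was obtained ``using the algebraic results from~\cite{Chevalley_main} on classification of all automorphisms of Chevalley groups over commutative rings and Keisler--Shelah Isomorphism theorem,'' which is precisely your two-ingredient strategy (ultraproduct commutation plus the standard-isomorphism classification).
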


\subsection{Elementary definability}\leavevmode

Along with problems such as Maltsev's theorem, problems of so-called \emph{elementary definability} were also posed.
These problems are formulated as follows:

\smallskip

\emph{Suppose that we have a class $\mathcal G$ of groups (rings, etc.) and an arbitrary group (rings, etc.) $H$ which is elementarily equivalent to some $G\in \mathcal G$. Is it true that $H\in \mathcal G$?}

\smallskip

For example the class of all Abelian groups is clearly elementary definable. More generally {\bf any variety of groups is elementary definable}: if $\mathcal G$ is some variety of groups defined by a system of identity relations $\Lambda$ and a group $H$ is elementarily equivalent to some $G\in \mathcal G$, then $H$ satisfies the same system $\Lambda$, therefore, $H\in \mathcal G$. 

The next interesting example is that {\bf the class of all linear groups over fields is elementary definable}: 

\begin{theorem}[A.I.\,Maltsev,~\cite{Mal}]
If $H$ is a group that is elementarily equivalent to a linear group, then $H$ is linear.
\end{theorem}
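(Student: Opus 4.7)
The plan is to reduce to an isomorphism question via the Keisler--Shelah Isomorphism Theorem cited above, and then to exploit the fact that linearity is preserved under ultrapowers. Assume $H \equiv G$ where $G \le \GL_n(K)$ for some field $K$ and some integer $n$. By Keisler--Shelah there exists an ultrafilter $\mathcal F$ such that $\prod_{\mathcal F} H \cong \prod_{\mathcal F} G$. So it suffices to show that the ultrapower $\prod_{\mathcal F} G$ is again linear, and then that $H$ inherits linearity from the isomorphic copy $\prod_{\mathcal F} H$.

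The first observation is that forming $\GL_n$ commutes with ultrapowers. Concretely, the map sending a tuple $(a_i^{(j,k)})_{i \in I}$ of $n^2$ sequences in $K^I$ to the corresponding element of $\GL_n(\prod_{\mathcal F} K)$ is a group isomorphism $\prod_{\mathcal F} \GL_n(K) \cong \GL_n(\prod_{\mathcal F} K)$; this is a direct application of \L o\'s's theorem, using that both $K$ being a field and the determinant being invertible are first-order properties. In particular $\prod_{\mathcal F} K$ is again a field, so the target is genuinely a linear group over a field of the same degree~$n$.

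Composing the embedding $\prod_{\mathcal F} G \hookrightarrow \prod_{\mathcal F}\GL_n(K)$ induced by $G \le \GL_n(K)$ with the above isomorphism gives $\prod_{\mathcal F} G \le \GL_n(\prod_{\mathcal F} K)$. Transporting across $\prod_{\mathcal F} H \cong \prod_{\mathcal F} G$, we obtain $\prod_{\mathcal F} H \le \GL_n(\prod_{\mathcal F} K)$. Finally the diagonal map $H \to \prod_{\mathcal F} H$ is an injective group homomorphism, so composing gives an embedding $H \hookrightarrow \GL_n(\prod_{\mathcal F} K)$, witnessing that $H$ is linear.

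The main conceptual point, and the step most likely to require care, is the second paragraph: checking that the ``linear group of degree $n$ over a field'' construction is truly preserved by the ultrapower functor. Once that is in hand, the rest is essentially formal bookkeeping. I expect no obstacle in degree, since the degree $n$ witnessing linearity of $G$ propagates to $H$; what is not controlled is the field, which is replaced by a (generally much larger) ultrapower. It is worth noting that the argument uses the Keisler--Shelah theorem in an essential way, so one cannot extract from it an explicit first-order axiomatization of the class of linear groups --- only the preservation statement under $\equiv$.
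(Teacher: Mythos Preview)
Your argument is correct. The key points --- that $\prod_{\mathcal F}\GL_n(K)\cong\GL_n\bigl(\prod_{\mathcal F}K\bigr)$ by \L o\'s, that the induced embedding of $\prod_{\mathcal F}G$ composes with the Keisler--Shelah isomorphism, and that the diagonal map $H\hookrightarrow\prod_{\mathcal F}H$ recovers linearity of $H$ itself --- all go through without difficulty.

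The paper, however, takes a different and more elementary route: a direct compactness argument. It writes down a theory $T$ in the language of rings augmented by constants $c^\alpha_{ij}$ (one $n\times n$ block for each element $h_\alpha\in H$), whose axioms say that the underlying structure is a field, each block has nonzero determinant, distinct blocks are distinct, and every word-relation holding among the $h_\alpha$ holds among the corresponding matrices. Finite satisfiability of $T$ is obtained by pulling the finitely many relevant relations back to $G\subseteq\GL_n(K)$ via $H\equiv G$; compactness then yields a field $\mathbb K$ and an embedding $H\hookrightarrow\GL_n(\mathbb K)$. This avoids Keisler--Shelah entirely and uses only the compactness theorem, so it is set-theoretically lighter and makes the mechanism (transferring finite configurations of relations) completely explicit. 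Your ultrapower argument is slicker and requires no bookkeeping of relations, but at the cost of invoking a substantially deeper theorem; as you yourself note, it also gives no hint of how one might try to axiomatise the class.
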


For the sake of completeness we include a proof.

\begin{proof}
Let $G \subset \GL_n(K)$ be a linear group which is elementary equivalent to~$H$. Enumerate
the elements of~$H$ as $H=\{ h_\alpha \mid \alpha\in \varkappa\}$ and enumerate all relations that hold between $h_\alpha$ by $\{ r_\beta (h_\alpha) \mid \beta\in \mu\}$. 
 Let $\mathcal L$ be the first-order language of rings together with constants  $c^\alpha_{ij}$ for $\alpha \in \varkappa$ and $1\leqslant i,j\leqslant n$.
 Consider the theory $T$ consisting of the following statements:

(1) The axioms of fields.

(2) The statements $\det(c^\alpha_{i,j})\ne 0$ for all $\alpha \in \varkappa$.

(3) The statements $(c^\alpha_{i,j})\ne  (c^\beta_{i,j})$ for all distinct $\alpha,\beta\in \varkappa$.

(4) The statements $r_\beta((c^\alpha_{i,j}))=1$ for all $\beta\in \mu$.

If $S$ is a finite subset of~$T$, then there is a finite set $I_0\subset \varkappa$ such that $S$ involves only
$c^\alpha_{i,j}$ for $\alpha \in I_0$. Since the elements $h_\alpha$, $\alpha \in I_0$,  satisfy all relations $r_\beta$ that involve only them, we get that there are elements 
$(g^\alpha_{i,j})\in G\subset \GL_n(K)$
that satisfy~$S$. In particular, $S$ is consistent. By the Compactness Theorem, there is a model $\mathbb K$
of~$T$. This $\mathbb K$ must be a field, and the map $h_\alpha \mapsto (c^\alpha_{i,j})$ is an embedding
$H\to \GL_n(\mathbb K)$.
\end{proof}

It is easy to see that the same proof holds for linear groups over commutative rings (is we just the axioms of commutative rings instead of the axioms of fields in (1)) or over any elementarily axiomatized class of commutative rings. Therefore  {\bf the class of all linear groups over commutative rings is elementary definable}.

On the other hand for general linear groups even over fields there is no elementary definability: there exists a group $H\equiv \GL_n(R)$ which is not $\GL$ itself.
To demonstrate it we will show an easy example.

\begin{example}
Consider the group  $\GL_3(\mathbb R)$ and construct elementarily equivalent group which is not of the type $\GL_3$.

To construct it we will use the following well-known statement (see, for example, \cite{KeisChan}):

\emph{Filtered products, filtered powers, direct products and direct powers preserve elementary equivalence. }

Consider now  the field $\mathbb R$ of real numbers and any arbitrary countable field $\mathbb K$ which is elementarily equivalent to~$\mathbb R$. Then consider the group $\SL_3(\mathbb R)\times \mathbb K^{*}$. We see that it is elementarily equivalent to the group $\SL_3(\mathbb R)\times \mathbb R^{*}$.

Let us now construct an isomorphism between $\GL_3(\mathbb{R})$ and $\SL_3(\mathbb R)\times \mathbb R^{*}$. Any matrix $X$ from $\GL_3(\mathbb{R})$ can be represented as the product of a matrix with determinant~$1$ and some scalar matrix with  $(\det(X))^{\frac{1}{3}}$ on its diagonal, since cubic root is bijective function in~$\mathbb{R}$.

Finally let us prove that $\SL_3(\mathbb R)\times \mathbb K^{*}$ is not a group of type $\GL_3(\mathbb{K}')$. Suppose that it is isomorphic to any $\GL_3(\mathbb K')$. Then from one side, considering centres of these groups we conclude that  $\mathbb{K}'$ has the same cardinality as $\mathbb{K}^{*}$ (i.\,e., it is countable), from another side, looking at quotient groups by their centres  we conclude that $\mathbb{K}'$ has the same power as $\mathbb{R}$ (i.\,e., has the power of continuum), contradiction.
\end{example}

\medskip

In 1984 B.I.\,Zilber (see~\cite{Zilber1984}) proved that for algebraic groups over algebraically closed fields  the problem of elementary definability has the positive answer:

\begin{theorem}\label{Zilber-definability}
The class of all simple algebraic groups over algebraically closed fields (or of all simple algebraic groups of the given type over algebraically closed fields) is elementary definable.
\end{theorem}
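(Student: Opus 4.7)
The plan is to reduce elementary definability of the class to a regular bi-interpretation between a simple algebraic group and its base field, combined with the classical fact that the class of algebraically closed fields is first-order axiomatisable. This is the same strategy that underlies the present paper: once one has a bi-interpretation, an elementary definability statement is essentially free.

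The first (and main) step would be to establish that for a simple algebraic group $G$ over an algebraically closed field $K$, the group $G(K)$ is regularly bi-interpretable with $K$. Concretely, one must produce formulas in the language of groups that define a copy of $K$ inside $G(K)$, formulas in the language of rings that define a copy of $G(K)$ inside $K$, and verify that the two compositions are definable isomorphisms of $G(K)$ with itself and of $K$ with itself. The standard route is to use a root subgroup, which is a one-parameter subgroup parametrised by $K$, in order to encode the additive structure of $K$, and then to recover multiplication from conjugation by a maximal torus via the Chevalley commutator formulas. This step is the principal obstacle: it requires the classification of simple algebraic groups, a choice of Bruhat or Chevalley generating set, and explicit control of the torus action on root subgroups so that the field operations are given by uniform first-order formulas in the group.

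Given the bi-interpretation, suppose $H\equiv G(K)$. Using the same group-to-field interpreting formulas, one interprets a structure $F$ inside $H$. All first-order properties of the field interpreted in $G(K)$ transfer to $F$; in particular, since being an algebraically closed field is expressed by the field axioms together with the scheme ``every monic polynomial of degree $n$ has a root'' for each $n\geqslant 1$, the structure $F$ is an algebraically closed field with $F\equiv K$. The bi-interpretation also provides formulas witnessing an isomorphism between $G(K)$ and the group obtained by interpreting $G(F)$ back inside $G(K)$; these formulas, being first-order and holding in $G(K)$, must also hold in $H$, yielding an isomorphism $H\cong G(F)$ with the same root datum. This gives the stronger ``fixed type'' conclusion of the theorem, from which the weaker version (that $H$ is a simple algebraic group over some algebraically closed field) is immediate.

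In short, the bulk of the work is the bi-interpretation in Step~1; Steps~2 and~3 are then formal consequences of elementary equivalence and the fact that ``algebraically closed field'' is an elementary class. The same pattern will be deployed, under substantially harder hypotheses, in the Chevalley-group-over-local-ring setting treated in the body of this paper.
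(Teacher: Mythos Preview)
The paper does not give its own proof of this statement: Theorem~\ref{Zilber-definability} is quoted as Zilber's 1984 result (with reference to \cite{Zilber1984}) in the historical survey, and no argument is supplied. There is therefore nothing in the paper to compare your proposal against directly.

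That said, your outline is exactly the template the paper \emph{does} implement for its own main theorem (Chevalley groups over local rings): establish a regular bi-interpretation between the group and the base ring, then invoke Proposition~\ref{prop-regular->def} to conclude elementary definability. So your sketch is methodologically aligned with the paper, and your remark that ``the same pattern will be deployed \ldots\ in the Chevalley-group-over-local-ring setting'' is accurate. It is worth noting, however, that Zilber's original 1984 argument predates the regular bi-interpretability framework as formulated here; his proof proceeds via the interpretation of the field in a simple algebraic group over an algebraically closed field together with model-theoretic properties specific to that setting (stability, categoricity-type considerations), rather than through the explicit two-way interpretation-plus-definable-isomorphisms package you describe. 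Your Step~1 is genuinely the hard part, as you say, and you have not carried it out; but since the paper does not carry it out either, this is not a defect relative to the paper.
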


In fact this problem has positive answer for simple algebraic groups over a wider class of rings, we will mention later  on the corresponding results of A.\,Myasnikov and M.\,Sohrabi and prove a generalization of this theorem for local rings.

\subsection{First order rigidity}\leavevmode

A new round of development of this topic has appeared recently in the papers of A.\,Nies (\cite{Nies_main}, \cite{Nies19}, etc), N.\,Avni, A.\,Lubotzky, C.\,Meiri (\cite{MyasKharl3}, \cite{AvniMeqri2}, also see~\cite{MyasKharl30}), A.\,Myasnikov, O.\,Kharlampovich and M.\,Sohrabi (\cite{Myas-Kharl-Sohr}, \cite{Myasnikov-Sohrabi}, \cite{Myasnikov-Sohrabi2}, etc.),  
D.\,Segal and K.\,Tent (\cite{Segal-Tent}), B.\,Kunyavskii, E.\,Plotkin, N.\,Vavilov (\cite{KuPlVav}) and others.

Since according to L\"owenheim--Skolem theorem
for any infinite structure $\mathcal A$ there exists a structure $\mathcal B$ such that $\mathcal A\equiv \mathcal B$ and they
have different cardinalities, so classification up to elementary equivalence is always strictly wider than classification up to isomorphism. But taking  finitely generated  structures (groups or   rings or anything else) there exists a possibility that for some of them elementary equivalence could imply isomorphism.

For example it is true for the ring $\mathbb Z$ of integers: if any finitely generated ring $A$ is elementarily equivalent to~$\mathbb Z$, then it is isomorphic to~$\mathbb Z$, but it is not true for infinitely generated rigs.

A question dominating research in this area has been if and when elementary
equivalence between finitely generated groups (rings) implies isomorphism. Recently,
Avni et. al.~\cite{MyasKharl3} presented the term \emph{first-order rigidity}: 

\medskip

\begin{definition}
A finitely generated
group (ring or other structure) $A$ is \emph{first-order rigid} if any other finitely generated group (ring or other structure)
elementarily equivalent to~$A$ is isomorphic to~$A$.
\end{definition}

\medskip

There are many examples of first-order rigid groups and algebras. For instance,
Avni, Lubotzky and Meiri~\cite{MyasKharl3} showed that non-uniform higher dimensional
lattices are first-order rigid, as well as finitely generated pro-finite groups~\cite{MyasKharl30}. 
Lasserre showed that under some natural conditions polycyclic groups~\cite{MyasKharl28}
are also first-order rigid, etc.  

\subsection{QFA-property}\leavevmode

In many groups holds the more strong property
when a single first-order axiom is sufficient to distinguish a group among
all the finitely generated groups. The following concept was introduced in \cite{Nies19} for
groups only, and then in \cite{Nies_main} for arbitrary structures:
 
\medskip

\begin{definition}\label{QFA}
 Fix a finite signature. An infinite finitely generated structure $\mathcal U$ is \emph{quasi finitely
axiomatizable} (\emph{QFA}) if there is a first order sentence $\varphi$ such that

(1) $\mathcal U\vDash \varphi$;

(2)  if $H$ is a finitely generated structure in the same signature such that $H \vDash \varphi$, then
$H\cong \mathcal U$.
\end{definition}

\medskip

These are examples of QFA groups (see~\cite{Nies_main}):

(1) nilpotent groups $\UT_3(\mathbb Z)$;

(2) metabeilan groups ${\mathbb Z}[1/m] \rtimes {\mathbb Z} =\langle a,d \mid d^{-1} ad=a^m\text{ for any }m\geqslant 3\rangle$ and $\mathbb Z/p \mathbb Z\wr \mathbb Z$ for any prime~$p$;  

(3) permutation groups: the subgroup of the group of permutations of~$\mathbb Z$
generated by the successor function and the transposition $(0, 1)$.

Speaking about rings the best (and the most important) example of QFA rings is the ring of integer $(\mathbb Z, +, \cdot)$ (\cite{Sabbagh}).

\subsection{Interpretability and bi-interpretability}\leavevmode

The real breakthrough in this area was the application of {\bf bi-interpretability}  towards finding new classes of  QFA rings and groups.

There is well-known notion of interpretability (with or without parameters) one structure in another one.
Roughly speaking, $\mathcal B$ is interpretable in~$\mathcal A$ if
the elements of~$\mathcal B$ can be represented by tuples in a definable relation~$\mathcal D$ on~$\mathcal A$,
in such a way that equality of~$\mathcal B$ becomes an $\mathcal A$-definable equivalence relation
$\mathcal E$ on~$\mathcal D$, and the other atomic relations on~$\mathcal B$ are also definable. 

Remark that all first-order formulas in these notions are allowed to contain parameters.

\medskip

\begin{example}
A simple
example is the difference group construction: for instance, $(\mathbb Z, +)$ can be
interpreted in $(\mathbb N, +)$, where the relation $\mathcal D$ is $\mathbb N\times \mathbb N$, addition is 
component-wise and $\mathcal E$ is the relation given by $(n,m)\mathcal E (n',m')\Longleftrightarrow n'+m=n+m'$.
Further examples include the quotient fields, which can be interpreted in the
given integral domain, and the group $\GL_n(R)$ for fixed $n\geqslant 1$, which can be
first-order interpreted in the ring~$R$. 
\end{example}

\medskip

There is a very  important notion of bi-interpretability.

We will consider the isomorphic
copy of~$\mathcal B$ that is defined in~$\mathcal A$ by the relevant collection of formulas. For
instance, taking in account the example of an interpretation of $(\mathbb Z, +)$ in $(\mathbb N, +)$ above; the actual copy of $(\mathbb Z,+)$ defined in $(\mathbb N,+)$ is the structure whose
domain consists of pairs of natural numbers, whose addition is component-wise
and where equality is the equivalence relation on pairs given there.

Suppose structures $\mathcal A, \mathcal B$ in finite signatures are given, as well as
interpretations of $\mathcal A$ in~$\mathcal B$, and vice versa. Then an isomorphic copy~$\widetilde{\mathcal A}$ of~$\mathcal A$ can be defined in~$\mathcal A$, by ``decoding''~$\mathcal A$ from the copy of~$\mathcal B$ defined in~$\mathcal A$.

Similarly, an isomorphic copy $\widetilde{\mathcal B}$ of~$\mathcal B$ can be defined in~$\mathcal B$. An isomorphism $\Phi: {\mathcal A}\cong \widetilde{\mathcal A}$ can be viewed as a relation on~$\mathcal A$, and similarly for an isomorphism $\mathcal B \cong \widetilde{\mathcal B}$. Let us give a variant of a notion of bi-interpretability:

\medskip

\begin{definition}[(see~\cite{Nies7}, Ch.\,5)]
We say that $\mathcal A$ and $\mathcal B$ are \emph{bi-interpretable} (with parameters) if there
are such isomorphisms that are first-order definable. 
\end{definition}

\medskip

Along with the notion of bi-interpretability with parameters it is crucial to introduce a notion of {\bf regular bi-interpretability}.

\medskip

\begin{definition}
We say that a structure $\mathcal A$ is interpreted in a given structure~$\mathcal B$ \emph{uniformly}
with respect to a subset $D\subseteq \mathcal B^k$ if there is one interpretation of~$\mathcal A$ in~$\mathcal B$ with any tuple of parameters $\overline p\in D$.

 If $\mathcal A$ is interpreted in~$\mathcal B$ uniformly with respect
to a $\emptyset$-definable subset $D\subseteq \mathcal B^k$ then we say that $\mathcal A$ is \emph{regularly interpretable} in~$\mathcal B$ and write in this case $\mathcal A \cong \Gamma(\mathcal B, \varphi)$, provided $D$ is defined by~$\varphi$ in~$\mathcal B$. 

Note that the interpretability without parameters (\emph{absolute interpretability}) is a particular case of the regular interpretability where the set $D$ is empty.
\end{definition}

\medskip

An  important application of regular interpretability is the following:

\begin{proposition}
If $\mathcal A_1 =\Gamma (\mathcal B_1,\varphi)$ and $\mathcal A_2=\Gamma (\mathcal B_2,\varphi)$ are two regular interpretations with the same interpretation and the same formula~$\varphi$, then $\mathcal B_1\equiv \mathcal B_2$ implies $\mathcal A_1\equiv \mathcal A_2$.
\end{proposition}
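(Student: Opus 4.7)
The plan is to reduce the claim to the standard translation lemma for interpretations. Recall that whenever a structure $\mathcal A$ is interpreted in a structure $\mathcal B$ via a collection of formulas (for domain, equivalence relation, operations and relations), there is an effective syntactic procedure $\psi \mapsto \psi^{*}(\bar p)$ that takes any first-order formula $\psi$ in the signature of $\mathcal A$ to a formula $\psi^{*}(\bar p)$ in the signature of $\mathcal B$, where $\bar p$ is the tuple of parameters used in the interpretation, such that for every admissible parameter tuple $\bar p$,
\[
\mathcal A_{\bar p} \models \psi \iff \mathcal B \models \psi^{*}(\bar p),
\]
where $\mathcal A_{\bar p}$ denotes the interpreted copy of $\mathcal A$ in $\mathcal B$ using parameters $\bar p$. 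The point of requiring \emph{regular} interpretability is precisely that the interpreting formulas do not depend on~$\bar p$ but only on the fixed formula $\varphi$ defining the parameter set $D\subseteq \mathcal B^{k}$; hence the same translation procedure works uniformly for all $\bar p \in D$ in both $\mathcal B_1$ and $\mathcal B_2$.

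First I would record this translation lemma for the common interpretation data, obtaining a fixed map $\psi \mapsto \psi^{*}(\bar p)$ that depends only on the interpretation scheme, not on which $\mathcal B_i$ we apply it to. Next, for each sentence $\psi$ in the signature of $\mathcal A$, I would consider the sentence
\[
\Psi \;:=\; \exists \bar p\,\bigl(\varphi(\bar p) \wedge \psi^{*}(\bar p)\bigr)
\]
in the signature of $\mathcal B$. Because $\mathcal A_i = \Gamma(\mathcal B_i,\varphi)$ exists, the set $D_i := \{\bar p \in \mathcal B_i^k : \mathcal B_i \models \varphi(\bar p)\}$ is nonempty, and by uniformity every choice of $\bar p \in D_i$ yields an isomorphic copy of $\mathcal A_i$. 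Consequently
\[
\mathcal A_i \models \psi \iff \mathcal B_i \models \Psi,
\]
for $i = 1,2$ (one could equally well use the $\forall \bar p\,(\varphi(\bar p)\to\psi^{*}(\bar p))$ form; both are equivalent here thanks to uniformity and nonemptiness of $D_i$).

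Finally, the assumption $\mathcal B_1 \equiv \mathcal B_2$ yields $\mathcal B_1 \models \Psi \iff \mathcal B_2 \models \Psi$, so combining with the two equivalences above gives $\mathcal A_1 \models \psi \iff \mathcal A_2 \models \psi$. Since $\psi$ was an arbitrary sentence, this proves $\mathcal A_1 \equiv \mathcal A_2$, as required. There is no real obstacle here: the whole argument rests on (i) the standard syntactic translation of formulas under an interpretation and (ii) the observation that ``regularity'' has been defined exactly so that the translation is independent of the choice of parameter tuple within $D$, which is what lets us turn a sentence about $\mathcal A_i$ into a single sentence of the language of $\mathcal B_i$ to which elementary equivalence can be applied.
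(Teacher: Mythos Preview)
Your argument is correct and is exactly the standard route via the syntactic translation lemma; the paper itself states this proposition without proof, so there is no alternative argument to compare against.
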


\medskip

In fact all Maltsev-type theorems mentioned above were proved either by Keisler--Shelah Isomorphism theorem or by regular interpretability of a basic field/ring in a corresponding linear groups, including the initial Maltsev theorem (Theorem~1).

\medskip

\begin{definition}
Two algebraic structures $\mathcal A$ and $\mathcal B$ are called \emph{regularly bi-interpretable}
in each other if the following conditions hold:

(1) $\mathcal A$ and $\mathcal B$ are regularly interpretable in each other, so $\mathcal A \cong \Gamma(\mathcal B, \varphi)$ and $\mathcal B \cong  \Delta(\mathcal A, \psi)$ for some interpretations (also called \emph{codes})  $\Gamma$ and $\Delta$ and the corresponding formulas $\varphi, \psi$
(without parameters). By transitivity $\mathcal A$, as well as~$\mathcal B$, is regularly interpretable in itself, so we can write
$$
 \mathcal A \cong (\Gamma \circ \Delta)(\mathcal A, \varphi^*)\text{ and }
\mathcal B \cong  (\Delta \circ \Gamma )(\mathcal B, \psi^*),
$$
 where $\circ$ denotes composition of interpretations and $\varphi^*, \psi^*$ are the corresponding formulas.

(2) There is a formula $\theta (\overline y, x, \overline z)$ in the language of~$\mathcal A$
 such that for every tuple~$p^*$ satisfying $\varphi^*(\overline z)$ in~$\mathcal A$ the formula $\theta (\overline y,x, p^*)$   defines in~$\mathcal A$ the isomorphism $\overline \mu_{\Gamma \circ \Delta}:  (\Gamma \circ \Delta)(\mathcal A, p^*) \to \mathcal A$ and there is a formula $\sigma (\overline u, x, \overline )$ in the
language of~$\mathcal B$ such that for every tuple $q^*$ satisfying $\psi^*(\overline v)$  in~$\mathcal B$ the formula
$\sigma (\overline v, x, q^*)$ defines in~$\mathcal B$ the isomorphism $\overline \mu_{\Delta \circ \Gamma}:  (\Delta \circ \Gamma)(\mathcal B, q^*) \to \mathcal B$.
\end{definition}

\medskip

If linear groups (or another derivative structures) of any concrete types over some classes of fields/rings are regularly bi-interpretable with the corresponding rings, then this class of groups (structures) is elementary definable (see~Proposition~\ref{prop-regular->def}).

\subsection{Bi-interpretability with the ring $\mathbb Z$ and QFA property}\leavevmode

The special interesting case is when one can prove that a given structure is bi-interpretable with the ring $\mathbb Z$ of integers.

Khelif~\cite{Nies12} (see also~\cite{Myas-Kharl-Sohr}) realised that one can use bi-interpretability of a finitely generated structure~$\mathcal A$ with~$(\mathbb Z, + \times)$ as a general method
to prove that $\mathcal A$ is QFA. 
Somewhat later, Scanlon independently
used this method to show that each finitely generated  field is QFA.

\begin{theorem}[Nies, \cite{Nies12}]
 Suppose the structure $\mathcal A$ in a finite signature is bi-interpretable
with the ring~$\mathbb Z$. Then $\mathcal A$ is QFA.
\end{theorem}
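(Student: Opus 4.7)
The plan is to combine Sabbagh's theorem that $(\mathbb Z,+,\cdot)$ is QFA with the bi-interpretation data to isolate $\mathcal A$ among finitely generated structures by a single sentence. Fix the bi-interpretation data: a code $\Gamma$ interpreting $\mathcal A$ in $\mathbb Z$, a code $\Delta$ interpreting $\mathbb Z$ in $\mathcal A$, and formulas $\theta,\sigma$ which define, in the two sides, the isomorphisms $\mu:(\Gamma\circ\Delta)(\mathcal A)\to\mathcal A$ and $\nu:(\Delta\circ\Gamma)(\mathbb Z)\to\mathbb Z$. Let $\psi_{\mathbb Z}$ denote Sabbagh's QFA sentence for $\mathbb Z$, so that any finitely generated ring satisfying $\psi_{\mathbb Z}$ is isomorphic to $\mathbb Z$.

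I would then take $\varphi$ to be the conjunction, in the signature of $\mathcal A$, of the following statements: (i) the well-formedness axioms for $\Delta$ (the defining formulas yield an equivalence relation on a definable subset of some $\mathcal A^{n}$ and the ring operations are well-defined on classes); (ii) the $\Delta$-translation of $\psi_{\mathbb Z}$, asserting that the ring interpreted by $\Delta$ satisfies $\psi_{\mathbb Z}$; (iii) a sentence stating that $\theta$ with an appropriate parameter tuple defines an isomorphism from the $(\Gamma\circ\Delta)$-interpretation back to the ambient model; and (iv) the $\Delta$-translation of the corresponding statement for $\sigma$, asserting that the interpreted ring admits a definable isomorphism from its $(\Delta\circ\Gamma)$-interpretation back to itself. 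By construction $\mathcal A\vDash\varphi$. Suppose now that $H$ is finitely generated with $H\vDash\varphi$, and set $N:=\Delta(H)$. Then $N$ is a ring satisfying $\psi_{\mathbb Z}$, and (iii) supplies a definable isomorphism $H\cong(\Gamma\circ\Delta)(H)=\Gamma(N)$; so once $N\cong\mathbb Z$ is established, one obtains $H\cong\Gamma(\mathbb Z)\cong\mathcal A$.

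The decisive step is therefore to show that $N$ is finitely generated as a ring. The argument would run as follows: pick generators $h_{1},\dots,h_{m}$ of $H$ and write $h_{i}=\mu([\bar c_{i}])$ for tuples $\bar c_{i}$ from $N$; let $A\subseteq N$ be the finite set of $N$-entries appearing in the $\bar c_{i}$'s together with all parameters used in the codes and in the formulas $\theta,\sigma$, and set $N_{0}:=\langle A\rangle$. The key observation is that $\Gamma(N_{0})$ contains the $\mu$-preimages of all generators of $H$, and since $\mu$ is an isomorphism of $\mathcal A$-structures these preimages generate all of $\Gamma(N)$; feeding this back through the companion isomorphism $\nu:(\Delta\circ\Gamma)(N)\cong N$ then forces $N_{0}=N$. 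This finite-generation transfer is the main obstacle: one must carefully trace both definable isomorphisms, ensure the parameters of the codes are absorbed into $A$, and verify that $N_{0}$ behaves compatibly under the composite interpretations (this is where the regularity of the bi-interpretation really enters). Once this is accomplished, Sabbagh's QFA sentence for $\mathbb Z$ gives $N\cong\mathbb Z$, and the previous paragraph concludes $H\cong\mathcal A$; hence $\mathcal A$ is QFA.
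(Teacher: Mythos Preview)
The paper does not prove this theorem: it is stated with attribution to Khelif~\cite{Nies12} and then used as a black box, so there is no argument in the text against which your proposal can be compared.

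For what it is worth, your outline is exactly the standard Khelif--Nies route, and the decomposition into clauses (i)--(iv) together with the reduction to showing that $N=\Delta(H)$ is finitely generated is the correct skeleton. Two small remarks on presentation. First, the hypothesis is ordinary bi-interpretability (with parameters, as in Hodges), not the regular version defined later in the paper; your closing comment that ``the regularity of the bi-interpretation really enters'' is slightly off-key. What actually handles the parameter issue on the $\mathbb Z$-side is that $\mathbb Z$ is pointwise $\emptyset$-definable and rigid, so the parameters occurring in $\Gamma$ and $\sigma$ may be replaced by their explicit definitions; the $\mathcal A$-side parameters are simply existentially quantified in~$\varphi$. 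Second, in the finite-generation step you should avoid writing ``$\Gamma(N_{0})$'': since $N_{0}$ is only a subring, not an elementary substructure, the interpretation $\Gamma$ need not restrict meaningfully to it. The clean formulation carries out the whole computation inside $H$ (both $\theta$ and the $\Delta$-translation of $\sigma$ are $H$-definable), which is precisely what you allude to when you say one must ``carefully trace both definable isomorphisms''. These are refinements rather than genuine gaps.
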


By a \emph{number field} we mean a finite extension of~$\mathbb Q$. By the \emph{ring of integers}~$\mathcal O$ of a number field~$F$ we mean the subring of~$F$ consisting of all roots of monic polynomials with
integer coefficients.

By  a known result any ring of integers $\mathcal O$ of a number field~$F$ is bi-interpretable with~$\mathbb Z$ (and therefore is QFA).

In the paper~\cite{Myasnikov-Sohrabi} A.\,Myasnikov and M.\,Sohrabi considered linear groups ($\GL_n(\mathcal O)$, $\SL_n(\mathcal O)$ and triangular groups $T_n(\mathcal O)$) over rings of integers of a field~$F$.
They proved that the groups $\SL_n(\mathcal O)$, $n\geqslant 3$ are bi-interpretable (with parameters) with the ring~$\mathcal O$ and therefore with~$\mathbb Z$; the groups $\GL_n(\mathcal O)$ and $T_n(\mathcal O)$, $n\geqslant 3$, are bi-interpretable with~$\mathcal O$ (and with~$\mathbb Z$) only if $\mathcal O^*$ is finite.
Consequently in all good cases these linear groups are QFA. 

Moreover they proved that for all these ``good cases'' the problem of general elementary definability has positive solution:

\begin{theorem}[Myasnikov, Sohrabi, \cite{Myasnikov-Sohrabi}]
 If $n\geqslant 3$, $\mathcal O$ is the ring of integers of some number field~$F$, $H$ is an arbitrary groups, $H\equiv \SL_n(\mathcal O)$ or $\mathcal O^*$ is finite and $H\equiv \GL_n(\mathcal O)$ or $H\equiv T_n(\mathcal O)$. Then $H\cong \SL_n(R)$ (or $\GL_n(R)$, $T_n(R)$ respectively) for some ring~$R$ such that $R\equiv \mathcal O$.
\end{theorem}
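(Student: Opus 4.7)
The plan is to derive the theorem from the general principle that a regular bi-interpretability transfers elementary equivalence on one side into the existence of an elementarily equivalent structure on the other side, together with an isomorphism of the original group to the interpreted copy. By the bi-interpretability results of Myasnikov and Sohrabi cited just above, $\SL_n(\mathcal O)$ is regularly bi-interpretable (with parameters) with $\mathcal O$ for $n\geqslant 3$, and, provided $\mathcal O^*$ is finite, the same holds for $\GL_n(\mathcal O)$ and $T_n(\mathcal O)$. Let $G$ denote one of these three groups and let $\Gamma,\Delta$ be the codes together with the formulas $\varphi^*,\psi^*,\theta,\sigma$ witnessing the bi-interpretability, so that $\mathcal O\cong \Gamma(G,\overline p)$ and $G\cong \Delta(\mathcal O,\overline q)$ for parameter tuples in the corresponding $\emptyset$-definable sets.

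Given $H\equiv G$, I would first use elementary equivalence to pick in $H$ a tuple $\overline q_H$ satisfying $\psi^*$: such a tuple exists because the sentence asserting its existence holds in $G$. Set $R:=\Gamma(H,\overline q_H)$. For each ring-theoretic sentence $\phi$, the statement ``for every tuple satisfying $\psi^*$, the interpreted structure satisfies $\phi$'' is a first-order sentence of $G$ whose truth is equivalent to $\mathcal O\models\phi$; transferring this sentence to $H$ gives $R\equiv\mathcal O$ uniformly in the chosen parameters. This is the standard consequence of regular interpretability stated earlier in the excerpt (every regular interpretation preserves elementary equivalence).

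Next I would assemble the isomorphism $H\cong \Delta(R)$ from the second half of the bi-interpretability data. The assertion ``for every parameter tuple $\overline q$ satisfying $\psi^*$, the formula $\sigma(\cdot,\cdot,\overline q)$ defines an isomorphism from $(\Delta\circ\Gamma)(G,\overline q)$ onto $G$'' is a first-order sentence in the language of $G$ that holds in $G$ by the very definition of regular bi-interpretability. Transferring this sentence to $H$ and specialising at $\overline q_H$ produces a concrete isomorphism $(\Delta\circ\Gamma)(H,\overline q_H)\to H$. But $(\Delta\circ\Gamma)(H,\overline q_H)=\Delta(R)$ by construction, and $\Delta$ is the code producing $\SL_n$ (respectively $\GL_n$, $T_n$) of a ring. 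Hence $H\cong \SL_n(R)$ (respectively $\GL_n(R)$, $T_n(R)$) with $R\equiv\mathcal O$, as required.

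The genuine obstacle is not this formal transfer, which is mechanical once regular bi-interpretability is in hand, but the underlying bi-interpretation itself, established by Myasnikov and Sohrabi. The delicate point in the $\GL_n$ and $T_n$ cases is the regular interpretation of the scalar/diagonal part: without the assumption $|\mathcal O^*|<\infty$ one cannot recover a copy of the unit factor of a matrix uniformly from the group structure by a single first-order formula, which is exactly why the finiteness hypothesis on $\mathcal O^*$ is imposed in those cases and not in the $\SL_n$ case.
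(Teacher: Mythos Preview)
Your argument is correct and is precisely the mechanism the paper itself isolates: the paper does not give a separate proof of this cited theorem, but immediately afterwards remarks that Myasnikov and Sohrabi in fact established \emph{regular} bi-interpretability, and later formalises exactly your transfer argument as Proposition~\ref{prop-regular->def}. Your proof is an instance of that proposition applied to the class $\mathcal R$ of rings elementarily equivalent to~$\mathcal O$, so approach and content coincide; the only discrepancies are notational (you swap the roles of $\Gamma/\Delta$ and of $\theta/\sigma$ relative to the paper's Definition of regular bi-interpretability, and the parameter formula you want in $H$ is the paper's $\varphi^*$, not $\psi^*$), none of which affects the substance.
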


Really they proved in their work that these groups are {\bf regularly} bi-interpretable with the corresponding rings.

The similar result was obtained by D.\,Segal and K.\,Tent~\cite{Segal-Tent}. They proved the following theorem about bi-interpretability (with parameters) of Chevalley groups over integral domains:

\begin{theorem}
 Let $G(\cdot)$ be a simple Chevalley-Demazure group scheme of rank at
least two, and let $R$ be an integral domain. Then $R$ and $G(R)$ are bi-interpretable
provided either

\emph{(1)} $G$ is adjoint, or

\emph{(2)} $G(R)$ has finite elementary width,\\
assuming in case $G$ is of type ${\mathbf E}_6$, ${\mathbf E}_7$, ${\mathbf E}_8$, or ${\mathbf F}_4$ that $R$ has at least two units.
\end{theorem}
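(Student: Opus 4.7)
My plan is to reduce everything to interpreting $R$ inside $G(R)$, since the other direction is trivial: the Chevalley-Demazure scheme presents $G(R)$ as a subvariety of $\GL_N(R)$ cut out by polynomial equations, so its elements and group law are $\emptyset$-definable in the language of $R$. All the substantive work is to realise $R$ as a single root subgroup of $G(R)$ equipped with definable addition and multiplication.

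The first step is to fix parameters that pin down the Chevalley generators. I would take $x_\beta(1)$ for $\beta$ ranging over a base $\Delta$ of $\Phi$ together with $x_{-\beta}(1)$, and a torus element $h_\alpha(u)$ for some unit $u$ that acts non-trivially on $X_\alpha$; this is where the hypothesis that $R$ has at least two units in the exceptional types $\mathbf{E}_6, \mathbf{E}_7, \mathbf{E}_8, \mathbf{F}_4$ earns its keep. From these parameters the root subgroup $X_\alpha = \{x_\alpha(t) : t \in R\}$ is cut out as the common centraliser of the $X_\beta$ for $\beta \perp \alpha$, intersected with the eigencondition for conjugation by the torus element; the Steinberg commutator formulas
\[
[x_\beta(s), x_\gamma(t)] = \prod_{i,j > 0} x_{i\beta + j\gamma}\bigl(N_{\beta\gamma ij}\, s^i t^j\bigr)
\]
make these relations first-order because the exponents and structure constants are fixed integers depending only on $\Phi$. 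Addition on $\widetilde{R} := X_\alpha \cong (R,+)$ is then the group operation, and multiplication comes from the identity $[x_\beta(s), x_\gamma(t)] = x_\alpha(N_{\beta\gamma}\, st)$ when $\alpha = \beta + \gamma$; when $\alpha$ is a simple or a highest root I first extract multiplication on an auxiliary root subgroup and transport it via the torus action $h_\beta(s)\, x_\alpha(t)\, h_\beta(s)^{-1} = x_\alpha\bigl(s^{\langle \alpha, \beta^\vee\rangle} t\bigr)$. Division by $N_{\beta\gamma}$ inside $R$ relies on integrality together with the two-units hypothesis in the exceptional cases.

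For bi-interpretability proper, the two round-trip isomorphisms must themselves be first-order definable. Starting from $R$, the round trip is a purely polynomial identification of tuples and poses no difficulty. Starting from $G(R)$, each element $g$ must be reconstructed as a matrix over $\widetilde{R}$ by a formula of fixed length; concretely, one writes $g$ as a bounded product of root subgroup elements and a torus element and reads off the coordinates. In case (1), where $G$ is adjoint, the torus is definably recoverable from $\widetilde{R}$ via the simple coroots acting faithfully, so the coordinate readout is forced. In case (2), the finite elementary width hypothesis is precisely the statement that every element of $G(R)$ admits such a uniformly bounded expression in root subgroups, which is what allows the round trip to be closed by a single formula.

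The main obstacle I expect is to produce a root subgroup that is genuinely uniformly first-order definable across all integral domains $R$ while also making the return isomorphism $\widetilde{G(R)} \to G(R)$ definable by a formula of bounded quantifier depth. Without finite elementary width or adjointness, the Bruhat-like decomposition is not known to have uniformly bounded length and the round trip cannot be closed; without enough units in the exceptional types, the torus action fails to separate elements of $X_\alpha$ and the eigenspace description of root subgroups collapses. These are exactly the obstructions that the two case alternatives and the exceptional-type unit condition are engineered to dissolve, and the technical core of the argument lies in showing that they indeed suffice.
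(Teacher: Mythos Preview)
This theorem is not proved in the paper: it is the main result of Segal and Tent \cite{Segal-Tent}, quoted in the introduction as background. The paper's own contribution (Theorem~\ref{th-reg-bi-interpret}) treats local rings rather than integral domains, and even there Section~3.2 explicitly defers to \cite{Segal-Tent} for Steps~3 and~6 of the bi-interpretation. So there is no in-paper proof to compare your proposal against line by line.

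That said, your outline is broadly in the right spirit and matches what the paper reveals about the Segal--Tent scheme: $R$ is identified with a root subgroup $X_\alpha$, addition is the group operation, multiplication is extracted from the Chevalley commutator relations, and the return map $G(R)\to G(\widetilde R)$ is made definable via a bounded-length normal form. Two points where your sketch diverges from or underspecifies the actual argument are worth flagging. First, the paper (following Segal--Tent for the integral-domain case, and using Gauss decomposition in its own local-ring case) does not interpret $R$ directly in $G(R)$ but passes through the chain $G(R)\to E(R)\to E_{\mathrm{ad}}(R)$ before locating $R$; this is how the adjoint hypothesis in case~(1) is actually used, not via ``simple coroots acting faithfully'' as you suggest. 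Second, your remark that ``division by $N_{\beta\gamma}$ relies on integrality together with the two-units hypothesis'' is muddled: the two-units assumption is needed only to produce a nontrivial torus element $h_\alpha(u)$ separating root subgroups in types $\mathbf E_6,\mathbf E_7,\mathbf E_8,\mathbf F_4$ (where all Cartan integers among simple roots are $0$ or $-1$), not to invert structure constants. The genuine technical core---uniform definability of $X_\alpha$ across all integral domains and the bounded-length reconstruction of elements---is precisely what \cite{Segal-Tent} supplies, and the present paper does not reproduce it.
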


From this theorem the authors by the similar way as Myasnikov--Sohraby proved the corollary about rigidity/QFA-property of Chevalley groups:

\begin{corollary}
Assume that $G$ and $R$ satisfy the hypotheses of the previous theorem. If $R$ is first order rigid (resp., QFA), in

\emph{(1)} the class o finitely generated rings,

\emph{(2)} the class of profinite rings, 

\emph{(3)} the class of locally compact  topological rings,\\
 then $G(R)$ has the analogous property in \emph{(1)} the class of finitely generated groups, \emph{(2)}
the class of profinite groups, \emph{(3)} the class of locally compact topological
groups.
\end{corollary}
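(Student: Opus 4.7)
The plan is to exploit the bi-interpretability of $R$ with $G(R)$ established in the previous theorem as a transport mechanism for first-order rigidity and quasi-finite axiomatizability. Write $\Gamma$ for the interpretation of $R$ inside $G(R)$ and $\Delta$ for the interpretation of $G(R)$ inside $R$ guaranteed by the hypothesis, together with the definable isomorphisms $G(R)\cong \Delta\bigl(\Gamma(G(R))\bigr)$ and $R\cong \Gamma\bigl(\Delta(R)\bigr)$. The guiding principle is that any class of structures stable under the operations $\Gamma$ and $\Delta$ (for instance, ``finitely generated'' stays finitely generated because the interpretation uses finitely many parameters, while profiniteness and local compactness are preserved by first-order definable topological interpretations) will automatically respect the passage between $R$ and $G(R)$.

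For first-order rigidity, let $H$ be a group lying in one of the classes (1), (2), (3) of groups with $H\equiv G(R)$. Choosing a parameter tuple of the correct type in $H$ (such a tuple exists by elementary equivalence and the fact that $G(R)$ admits one), applying the formulas of $\Gamma$ yields a ring $R'=\Gamma(H)$ with $R'\equiv R$. The first key step is to verify that $R'$ belongs to the corresponding class of rings: if $H$ is finitely generated then $R'$ is definable from finitely many parameters and its elements are equivalence classes of tuples over finitely many generators, hence $R'$ is a finitely generated ring; in the profinite and locally compact cases one checks that the definable interpretation respects the topology, so that $R'$ inherits the matching topological structure. First-order rigidity of $R$ then forces $R'\cong R$, and applying $\Delta$ together with the definable isomorphism $H\cong \Delta\bigl(\Gamma(H)\bigr)$ gives $H\cong \Delta(R')\cong \Delta(R)\cong G(R)$, as desired.

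For the QFA variant, let $\varphi_R$ be a QFA sentence for $R$ in the relevant class. Construct a group-theoretic sentence $\varphi_G$ as the conjunction of: (a) the assertion that, using the formulas of $\Gamma$ applied to a suitable parameter tuple, one obtains a ring satisfying $\varphi_R$; and (b) the assertion that the formula defining the bi-interpretational isomorphism $\mu\colon \Delta\bigl(\Gamma(\cdot)\bigr)\to\cdot$ does in fact define a bijective group homomorphism onto the ambient group. Then $G(R)\vDash \varphi_G$ by construction of the bi-interpretation, and any finitely generated (respectively profinite, locally compact) group $H\vDash \varphi_G$ produces via $\Gamma$ a finitely generated (respectively profinite, locally compact) ring $R'$ satisfying $\varphi_R$. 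QFA of $R$ forces $R'\cong R$, and clause (b) then gives $H\cong \Delta(R')\cong G(R)$.

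The main obstacle is the topological cases (2) and (3): one has to fix the correct notion of bi-interpretation in the categories of profinite and locally compact structures, and then verify that the bi-interpretation of the previous theorem is topological in this stronger sense, i.e.\ the defining formulas produce continuous operations and that the equivalence relation used to form $\Gamma(H)$ has closed classes. Once this compatibility is secured, the abstract argument above goes through uniformly in all three classes; in the purely algebraic setting (1) the verification is essentially formal, turning on the observation that an interpretation with finitely many parameters applied to a finitely generated group yields a finitely generated ring.
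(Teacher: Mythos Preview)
The paper does not supply its own proof of this corollary; it is quoted as a result of Segal and Tent, with only the remark that they proved it ``by the similar way as Myasnikov--Sohraby.'' There is thus no in-paper argument to compare against, and the outline you give is indeed the standard transport-of-properties argument via bi-interpretability that underlies all such results.

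One genuine gap in your write-up concerns case~(1). Your justification that $R'=\Gamma(H)$ is a finitely generated ring is not valid as stated: a single interpretation applied to a finitely generated structure need not produce a finitely generated structure, and the observation that ``its elements are equivalence classes of tuples over finitely many generators'' does not by itself bound the number of ring generators of~$R'$. What makes the conclusion correct is the \emph{full} bi-interpretation. From the definable isomorphism $H\cong\Delta(\Gamma(H))=\Delta(R')$ the finitely many generators of~$H$ are sent to finitely many tuples over~$R'$; the second definable isomorphism $R'\cong\Gamma(\Delta(R'))$ then lets one express every element of~$R'$ in terms of the finitely many entries of those tuples together with the interpretation parameters, which shows that $R'$ is finitely generated. (Alternatively, in the Chevalley setting one can invoke the concrete fact that $G(R')$ finitely generated forces $R'$ finitely generated.) The analogous closure issues in cases~(2) and~(3), which you rightly flag, require checking that the interpreting formulas define closed sets and continuous operations; Segal--Tent themselves defer to the literature for this.
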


All these recent papers illustrate that together with bi-interpretability with parameters it is unprecedented important to state regular bi-interpretability of linear groups and corresponding rings.

Therefore this paper is devoted to {\bf regular bi-interpretabilty of Chevalley groups over local rings}. As a corollary we will obtain elementary definability of these groups in the class of all groups.

We prove the following theorem and its corollary:

\begin{theorem}\label{th-reg-bi-interpret}
If  $G(R)=G_\pi (\Phi,R)$ $(E(R)=E_{\pi}(\Phi, R))$ is an (elementary)  Chevalley group of rank $> 1$, $R$ is a local ring  (with $\frac{1}{2}$ for the root systems ${\mathbf B}_l, {\mathbf C}_l, {\mathbf F}_4, {\mathbf G}_2$ and with  $\frac{1}{3}$ for  ${\mathbf G}_{2})$, then the group $G(R)$ (or $(E(R)$) is regularly bi-interpretable with~$R$.
\end{theorem}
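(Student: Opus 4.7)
The plan is to establish the two interpretations required by the definition of regular bi-interpretability, and then write down the two definable isomorphisms $G(R)\cong(\Gamma\circ\Delta)(G(R),\psi^\ast)$ and $R\cong(\Delta\circ\Gamma)(R,\varphi^\ast)$. The direction $R\Rightarrow G(R)$ is essentially formal: since $G(R)\subset\GL_n(R)$ is cut out of $R^{n^2}$ by finitely many polynomial equations (and by $\det\neq 0$ after adjoining an inverse-of-determinant coordinate), the matrices, matrix product, and identity are all $\emptyset$-definable in the ring language, so $G(R)$ is even absolutely interpretable in $R$. Thus all the work lies in the direction $G(R)\Rightarrow R$, together with the regularity (parameter-$\emptyset$-definability) and the uniformity of the two decoding isomorphisms.

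For the interpretation of $R$ in $G(R)$ I would follow the classical Chevalley recipe: choose as parameters a \emph{pinning}, namely a maximal split torus $T$ together with the unipotent root generators $x_{\alpha_i}(1)$ for the simple roots $\alpha_1,\dots,\alpha_l$ of $\Phi$ (or equivalently the pair $(T,B)$ plus the Chevalley transvections). Once a pinning is fixed, each root subgroup $X_\alpha$ is definable (as the intersection of $T$-weight spaces under conjugation, or via iterated commutators of the $X_{\alpha_i}$), and the map $r\mapsto x_\alpha(r)$ identifies $(R,+)$ with $X_\alpha$. Multiplication on $R$ is recovered from the Chevalley commutator formula $[x_\alpha(r),x_\beta(s)]=\prod_{i,j>0}x_{i\alpha+j\beta}(N_{\alpha,\beta,i,j}r^is^j)$, which for a chosen pair of roots with $\alpha+\beta\in\Phi$ isolates $rs$ up to a known structure constant $N_{\alpha,\beta}$ (invertible under the stated $1/2$, $1/3$ hypotheses). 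This yields a definable ring structure $\widetilde R$ on any fixed root subgroup, so $R\cong\Gamma(G(R),\varphi)$ in the parameter tuple $\overline p=(T,x_{\alpha_1}(1),\dots,x_{\alpha_l}(1))$.

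The main obstacle is the word \emph{regular}: the set $D$ of admissible parameter tuples must be $\emptyset$-definable in $G(R)$, and the interpretation formulas must be independent of the chosen tuple in $D$. To achieve this I would proceed as in the proof of Theorem~\ref{th_Chevalley_Local}: since $R$ is local, every maximal split torus is $G(R)$-conjugate to the standard one (by Bruhat/triangular decomposition and lifting of idempotents from the residue field, using $1/2$ for the classical types and $1/3$ for $\mathbf G_2$), and a tuple $\overline p$ of Chevalley generators is recognisable by the first-order axioms of the Steinberg relations relative to the root system $\Phi$: root-subgroup commutativity along non-summable pairs, the commutator formulas with prescribed structure constants, and the $W$-conjugation identities between the $x_{\alpha_i}(1)$ and the torus. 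Verifying these relations in $G(R)$ is a finite, $\emptyset$-definable condition, so $D$ is $\emptyset$-definable, and any two tuples in $D$ are conjugate via an inner automorphism, which ensures the interpretation is uniform up to a definable isomorphism of the resulting ring.

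Finally, for the bi-interpretability I have to exhibit two definable isomorphisms. The isomorphism $R\to\widetilde{\widetilde R}$, where $\widetilde{\widetilde R}$ is the ring obtained by running $\Delta$ then $\Gamma$, is given by sending $r\in R$ to (the tuple coding) $x_{\alpha_1}(r)$ inside the internally constructed $G(R)$; this is polynomial and hence ring-definable. The isomorphism $G(R)\to\widetilde{\widetilde{G(R)}}$, which is the more delicate one, is built from the Bruhat decomposition: each $g\in G(R)$ is written as $g=u_1\,t\,w\,u_2$ with $u_1,u_2$ products of root-unipotents and $t\in T$, and the data $(u_1,t,w,u_2)$ can be recovered uniformly in $G(R)$ with the pinning as parameters. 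Uniqueness of the decomposition over a local ring makes the assignment functional, and the whole construction produces a formula $\theta(\overline y,x,\overline z)$ satisfying clause~(2) of the definition of regular bi-interpretability. Once both isomorphisms are in place, the corollary about elementary definability of the class of Chevalley groups over local rings follows from Proposition~\ref{prop-regular->def} applied to $\Gamma$ and $\varphi$.
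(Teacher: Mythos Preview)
Your overall architecture is right, but there are two genuine gaps, both at the points where the local-ring hypothesis actually bites.

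\medskip

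\textbf{Definability of root subgroups.} You assert that once a pinning is fixed, each $X_\alpha$ is definable ``as the intersection of $T$-weight spaces under conjugation, or via iterated commutators of the $X_{\alpha_i}$''. Over a field this is fine, but over a local ring with nontrivial radical $J$ neither recipe isolates $X_\alpha$: torus-weight conditions and commutator towers are blind to perturbations by elements of $E_J=E_{\ad}(\Phi,R,J)$, so what you pick out is at best $X_\alpha\cdot(\text{something in }E_J)$, not $X_\alpha$ itself. The paper's proof makes this the central technical step. It first shows that $E_J$ is \emph{absolutely} definable in $E_{\ad}(\Phi,R)$ (as the unique maximal proper normal subgroup), so the quotient $E_{\ad}(\Phi,R/J)$ over the residue field is absolutely interpretable; there Segal--Tent gives definability of the root subgroups. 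Pulling back gives only $X_\alpha E_J$. The nontrivial point is then to intersect this with the definable set $G_{\alpha}=\{g:C_G(g)=C_G(x_\alpha(1))\}$ and to prove, by a careful root-by-root analysis of the Gauss decomposition $g=u\,t\,v$ under conjugation by $x_\beta(1)$ with $\alpha+\beta\notin\Phi$ (and by $h_\gamma(-1)$ in the doubly-laced cases), that $X_\alpha E_J\cap G_\alpha\cdot G_\alpha=X_\alpha$. This argument (the paper's Proposition~4 and Lemmas~\ref{G2}--\ref{h}) is the heart of the theorem and is entirely absent from your proposal.

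\medskip

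\textbf{Regularity.} Your claim that any two admissible parameter tuples are conjugate by an \emph{inner} automorphism is false: ring automorphisms of $R$ and graph automorphisms of $\Phi$ move one pinning to another that is not inner-conjugate to it. What is true, and what the paper uses, is that any tuple satisfying the right first-order conditions is the image of the standard pinning under \emph{some} automorphism of $E_{\ad}(\Phi,R)$; this is exactly the content of the classification of automorphisms of Chevalley groups over local rings \cite{Bunina_Chev_local_aut}. Moreover, merely imposing Steinberg relations on a tuple is not enough: the paper's formula for $D$ also encodes that the resulting $X_\alpha(\overline x)$ with $\oplus,\otimes$ is a local ring \emph{and} that every element of the group admits a Gauss decomposition $utvu'$ through these subgroups (Proposition~\ref{gauss}). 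Without that last clause you have no way of knowing that your parameters see the whole group rather than a proper Chevalley subgroup.

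\medskip

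A smaller point: you invoke the Bruhat decomposition $g=u_1\,t\,w\,u_2$ with a Weyl element for the isomorphism $G(R)\to\widetilde{\widetilde{G(R)}}$, but over a local ring the workable object is the Gauss decomposition $g=u\,t\,v\,u'$ (no $w$), and it is the first-order expressibility of equality and multiplication in these coordinates (Proposition~\ref{gauss}(2),(3)) that makes the isomorphism definable. Your matrix-entry interpretation of $G(R)$ in $R$ is correct but does not mesh as cleanly with this step as the paper's Gauss-coordinate interpretation does.
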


\begin{corollary}[elementary definability of Chevalley groups]
The class of Chevalley groups over local rings is elementarily definable, i.\,e., if $G(R)=G_\pi (\Phi,R)$ is a Chevalley group of rank $> 1$, $R$ over a  local ring $R$  (with $\frac{1}{2}$ for the root systems ${\mathbf A}_2, {\mathbf B}_l, {\mathbf C}_l, {\mathbf F}_4, {\mathbf G}_2$ and with  $\frac{1}{3}$ for  ${\mathbf G}_{2})$  and for an arbitrary group $H$ we have $H\equiv G(R)$, then $H\equiv G_\pi(\Phi,R')$ for some local ring~$R'$, which is elementarily equivalent to~$R$.
\end{corollary}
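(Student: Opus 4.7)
The plan is to derive this corollary as a direct formal consequence of Theorem~\ref{th-reg-bi-interpret}; indeed, this is precisely the model-theoretic payoff of regular bi-interpretability, analogous to Proposition~\ref{prop-regular->def} and to the way Myasnikov--Sohrabi derive their Theorem for $\SL_n(\mathcal{O})$. Once the bi-interpretability codes $\Gamma,\Delta$ and the formulas $\varphi,\psi,\varphi^*,\psi^*,\theta,\sigma$ guaranteed by Theorem~\ref{th-reg-bi-interpret} are in hand, the content of the corollary is essentially bookkeeping in first-order logic.

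Given an arbitrary group $H$ with $H \equiv G_\pi(\Phi,R)$, the first step is to manufacture a candidate ring $R'$ inside $H$. By Theorem~\ref{th-reg-bi-interpret} the ring $R$ is regularly interpretable in $G_\pi(\Phi,R)$ via $\Gamma$, with $\emptyset$-definable set of admissible parameter tuples cut out by $\varphi$; in particular the sentence ``$\varphi$ is non-empty'' is true in $G_\pi(\Phi,R)$, hence also true in $H$. Choose any $\bar p\in H$ satisfying $\varphi$ and set $R' := \Gamma(H,\bar p)$. Because $\Gamma$ is a fixed scheme of formulas independent of the choice of admissible parameter, the standard interpretation lemma yields $R'\equiv \Gamma(G_\pi(\Phi,R),\bar p_0)\cong R$ for any admissible $\bar p_0$ in $G_\pi(\Phi,R)$, so $R'\equiv R$. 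Since being a local ring is expressible by a first-order sentence (the non-units form an ideal), $R'$ is automatically a local ring with the required invertibility of $2$ or $3$.

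The second step is to produce an isomorphism $H \cong G_\pi(\Phi,R')$. By part (2) of the definition of regular bi-interpretability, in $G_\pi(\Phi,R)$ there exists a tuple $q^*$ satisfying $\psi^*$ such that the formula $\sigma(\bar v,x,q^*)$ defines an isomorphism $(\Delta\circ\Gamma)(G_\pi(\Phi,R),q^*)\to G_\pi(\Phi,R)$; this whole assertion is a single first-order sentence in the group language. Transferring it through $H\equiv G_\pi(\Phi,R)$ produces a tuple $q^*\in H$ for which $\sigma(\bar v,x,q^*)$ defines an isomorphism $(\Delta\circ\Gamma)(H,q^*)\to H$. Unwinding the composition, $(\Delta\circ\Gamma)(H,q^*)$ is precisely $\Delta(\Gamma(H,\bar p),\psi) = G_\pi(\Phi,R')$, where $\bar p$ is the subtuple of $q^*$ feeding $\Gamma$ (and the rest of $q^*$ lies in $\Gamma(H,\bar p)$ and satisfies $\psi$ there, by the construction of the composed interpretation). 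This gives $H\cong G_\pi(\Phi,R')$.

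The ``main obstacle'' in this plan is entirely notational: carefully matching the parameter tuples feeding $\Gamma$ and $\Delta$ in the composition $\Delta\circ\Gamma$ and verifying that every assertion actually used in the transfer is a genuine first-order sentence of the group language $G_\pi(\Phi,R)$. All substantive mathematics has already been placed in Theorem~\ref{th-reg-bi-interpret}, so no additional Chevalley-group-theoretic or ring-theoretic input is needed here, and the same argument simultaneously handles the elementary version $E_\pi(\Phi,R)$.
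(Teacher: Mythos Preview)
Your proposal is correct and follows essentially the same route as the paper: the paper simply invokes Proposition~\ref{prop-regular->def} with Theorem~\ref{th-reg-bi-interpret} as input, whereas you inline the argument of that proposition (interpret $R'=\Gamma(H,\bar p)$, transfer the definable isomorphism $\sigma$ through $H\equiv G_\pi(\Phi,R)$, and conclude $H\cong \Delta(R')\cong G_\pi(\Phi,R')$). The only extra ingredient the paper mentions is the citation of~\cite{Bunina-local} to rule out a change of type $(\Phi,\pi)$, but since your interpretation code $\Delta$ is already specific to $(\Phi,\pi)$ this is implicit in your argument as well.
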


From the next section we will concentrate specially on Chevalley groups over local rings. 
We start with necessary definitions and references.

\section{Chevalley groups over local rings and their properties}\leavevmode

Basic notions about root systems, semisimple Lie algebras, Chevalley
groups, that will be used in this paper, can be found in the papers
of the author \cite{Bunina_intro3}, \cite{Chevalley_main}.
Detailed information about root systems can be found in the books
\cite{Hamfris},~\cite{Burbaki}. More detailed information about
semisimple Lie algebras can be found in the book~\cite{Hamfris}.
More detailed information about elementary Chevalley groups is
contained in the book~\cite{Steinberg}, and about the Chevalley
groups  (also over rings) in~\cite{Vavilov}, \cite{VavPlotk1}, \cite{Chevalley} (see also
later references in these papers).

We fix some arbitrary (indecomposable) root system $\Phi$ of the
rank~$l\geqslant 2$, we suppose that in this system there are
$n$~positive and  $n$~negative roots.

Additionally we fix some infinite commutative local ring~$R$ with~$1$
(for the root systems ${\mathbf A}_2$, ${\mathbf B}_l$, ${\mathbf C}_l$, ${\mathbf F}_4$ and ${\mathbf G}_2$ with $1/2$ and for  ${\mathbf G}_2$ also with $1/3$).

We consider an arbitrary Chevalley group $G_\pi(\Phi,R)$,
constructed by the root system $\Phi$, a ring~$R$ and a
representation $\pi$ of the corresponding Lie algebra. It is known,
that Chevalley group is defined by the root system, the ring~$R$ and
the weight lattice of the representation~$\pi$. We will denote this
lattice by~$\Lambda$ or $\Lambda_\pi$. If we consider an elementary
Chevalley group, we denote it by $E_\pi(\Phi,R)$.

The subgroup of all diagonal (in a standard basis of weight vectors)
matrices of the Chevalley group $G_\pi(\Phi,R)$ is called the
\emph{standard maximal torus} of $G_\pi(\Phi,R)$ and is denoted by
$T_\pi(\Phi,R)$. This group is isomorphic to $Hom(\Lambda_\pi,
R^*)$.

 Let us denote by $h(\chi)$
an element of $T_\pi (\Phi,R)$, corresponding to the homomorphism
$\chi\in Hom (\Lambda(\pi),R^*)$.

In particular, $h_\alpha(u)=h(\chi_{\alpha,u})$ ($u\in R^*$, $\alpha
\in \Phi$), where
$$
\chi_{\alpha,u}: \lambda\mapsto u^{\langle \lambda,\alpha\rangle}\quad
(\lambda\in \Lambda_\pi),\text{ where }\langle \lambda,\alpha\rangle = \frac{2(\lambda,\alpha)}{(\alpha,\alpha)}.
$$

Connection between Chevalley groups and corresponding elementary
subgroups is an important problem in the theory of Chevalley
groups over rings. For elementary Chevalley groups there exists a
convenient system of generators $x_\alpha (\xi)$, $\alpha\in \Phi$,
$\xi\in R$, and all relations between these generators are well-known.
For general Chevalley groups it is not always true.

If $R$ is an algebraically closed field, then
$$
G_\pi (\Phi,R)=E_\pi (\Phi,R)
$$
for any representation~$\pi$. This equality is not true even for the
case of fields, which are not algebraically closed.

However if $G$ is a universal group and the ring $R$ is \emph{semilocal}
(i.e., contains only finite number of maximal ideals), then we have
the condition
$$
G_{sc}(\Phi,R)=E_{sc}(\Phi,R).
$$
\cite{M}, \cite{Abe1}, \cite{St3}, \cite{AS}.

Let us show the difference between Chevalley groups and their
elementary subgroups in the case when a ring $R$ is semilocal and
a corresponding Chevalley group is not universal.  In this case $G_\pi
(\Phi,R)=E_\pi(\Phi,R)T_\pi(\Phi,R)$] (see~\cite{Abe1}, \cite{AS},
\cite{M}), and the elements $h(\chi)$ are connected with elementary
generators by the formula
\begin{equation}\label{e4}
h(\chi)x_\beta (\xi)h(\chi)^{-1}=x_\beta (\chi(\beta)\xi).
\end{equation}

If $\Phi$ is an irreducible root system of a rank $l\geqslant
2$, then $E(\Phi,R)$ is always normal in $G(\Phi,R)$ (see~\cite{Tadei}, \cite{Hasrat-Vavilov}). In the case of
semilocal rings with~$1/2$ it is easy to show that
$$
[G(\Phi,R),G(\Phi,R)]=E(\Phi,R).
$$

However in the case $l=1$ the subgroup of elementary matrices
$E_2(R)=E_{sc}(A_1,R)$ is not necessarily normal in the special linear
group $SL_2(R)=G_{sc}(A_1,R)$ (see~\cite{Cn}, \cite{Sw},
\cite{Su1}).

We will use several facts proved in~\cite{Bunina-local}.

The next lemma from~\cite{Bunina-local} (see Lemma~1 there) is well-known (see also \cite{KuPlVav} for the exact boundary):

\begin{lemma}\label{commutant}
Let $G=G_\pi(\Phi,R)$ be a Chevalley group, $E=E_\pi(\Phi,R)$ be its
elementary subgroup, $R$ a semilocal ring (with $1/2$ for the root systems ${\mathbf A}_2$, ${\mathbf B}_l$, ${\mathbf C}_l$, ${\mathbf F}_4$, ${\mathbf G}_2$  and with $1/3$  for  ${\mathbf G}_2$). Then $E=[G,G]$ and there
exists such a number~$N$, depending of~$\Phi$, but not of~$R$
\emph{(}and not of a representation~$\pi$\emph{)}, that every
element of the group~$E$ is a product of not more than
$N$~commutators of the group~$G$.
\end{lemma}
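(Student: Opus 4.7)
The plan is to split the lemma into two halves --- the equality $E=[G,G]$ and the uniform commutator-width bound --- and to reduce each half to the combination of two independent facts: every element of~$E$ is a product of a bounded number of elementary root unipotents $x_\alpha(\xi)$, and every such $x_\alpha(\xi)$ is itself a single commutator in the enveloping group~$G$.

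For the inclusion $[G,G]\subseteq E$ I use that, since $\Phi$ has rank $\geqslant 2$, the subgroup $E$ is normal in~$G$ by \cite{Tadei,Hasrat-Vavilov}, together with the semilocal decomposition $G=E\cdot T$ from \cite{Abe1,AS,M}. The quotient $G/E$ is then a homomorphic image of the abelian torus~$T$, hence abelian, so $[G,G]\subseteq E$. For the opposite inclusion I realise each elementary generator as a single commutator by specialising~\eqref{e4}:
\[
[h(\chi_{\beta,u}),\,x_\alpha(\eta)]=x_\alpha\bigl((u^{\langle\alpha,\beta\rangle}-1)\,\eta\bigr).
\]
Under the stated invertibility hypotheses, for every $\alpha\in\Phi$ one can pick a companion root~$\beta$ and a unit $u\in R^*$ so that $u^{\langle\alpha,\beta\rangle}-1\in R^*$: the cheap choice $u=-1$ works whenever $2\in R^*$ and some pairing $\langle\alpha,\beta\rangle$ is odd, while the short/long mismatch in~$G_2$ is precisely what forces the additional hypothesis $3\in R^*$. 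Setting $\xi:=(u^{\langle\alpha,\beta\rangle}-1)\eta$ yields $x_\alpha(\xi)=[h(\chi_{\beta,u}),x_\alpha((u^{\langle\alpha,\beta\rangle}-1)^{-1}\xi)]$, exhibiting each elementary generator as a commutator in~$G$. Hence $E\subseteq[G,G]$.

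For the uniform width bound the essential ingredient is the Gauss (or triangular) decomposition for elementary Chevalley groups over semilocal rings from \cite{Abe1,AS,M}: every $g\in E_\pi(\Phi,R)$ may be written as $g=u_{+}\cdot t\cdot u_{-}$, with $u_\pm$ a canonically ordered product of $|\Phi^{+}|$ unipotent factors along the positive (resp.\ negative) roots, and $t\in T\cap E$. Using the Steinberg identity $h_\alpha(v)=w_\alpha(v)w_\alpha(1)^{-1}$ with $w_\alpha(v)=x_\alpha(v)x_{-\alpha}(-v^{-1})x_\alpha(v)$, each of the at most $l$ toral generators composing~$t$ unfolds into $6$ elementary unipotents. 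Thus every $g\in E$ is a product of at most $M=M(\Phi):=2|\Phi^{+}|+6l$ elementary unipotents, a bound independent of both~$R$ and~$\pi$. Combined with the preceding paragraph this produces the desired constant $N:=M(\Phi)$, since each elementary factor is a single commutator in~$G$.

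The main obstacle is controlling the toral piece $t\in T\cap E$ so that its length is bounded by a function of~$\Phi$ alone; this is exactly where the semilocal hypothesis enters decisively, in contrast to general commutative rings where no such uniform bound need exist. Independence of the representation~$\pi$ is automatic, since every identity employed --- the relation~\eqref{e4}, the Chevalley commutator formula, and the Steinberg definitions of $w_\alpha$ and $h_\alpha$ --- holds verbatim in every Chevalley group $G_\pi(\Phi,R)$ regardless of the weight lattice~$\Lambda_\pi$.
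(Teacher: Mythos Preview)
The paper does not actually prove this lemma; it merely cites it from~\cite{Bunina-local} and~\cite{KuPlVav}. So there is no argument in the paper to compare against, and your outline has to stand on its own. The overall architecture --- $G=ET$ with $E\trianglelefteq G$ gives $[G,G]\subseteq E$, each root unipotent is a single commutator, and Gauss decomposition bounds the unipotent length --- is exactly the standard one and is sound. Two points need repair, however.

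The real gap is in the step ``every $x_\alpha(\xi)$ is a commutator''. Your only mechanism is the toral identity $[h_\beta(u),x_\alpha(\eta)]=x_\alpha\bigl((u^{\langle\alpha,\beta\rangle}-1)\eta\bigr)$, which requires $u^{\langle\alpha,\beta\rangle}-1\in R^*$ for some unit~$u$ and some root~$\beta$. For the simply-laced systems $\mathbf A_l\ (l\geqslant 3)$, $\mathbf D_l$, $\mathbf E_l$ the lemma does \emph{not} assume $\tfrac12\in R$, and over, say, $R=\mathbb Z_{(2)}$ every unit $u$ has $u^k-1\in 2R$ for all $k$, so $u^k-1$ is never a unit and your argument produces no commutator at all. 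The fix is immediate but different in kind: in any irreducible system of rank $\geqslant 2$, every root $\gamma$ can be written as $\alpha+\beta$ with $\alpha,\beta\in\Phi$ lying in a common $\mathbf A_2$-subsystem, whence the Chevalley commutator formula gives $[x_\alpha(\pm\xi),x_\beta(1)]=x_\gamma(\xi)$ with structure constant $\pm1$. This covers the simply-laced types with no invertibility hypothesis, and your toral argument (with $u=-1$) then handles the remaining types where $\tfrac12\in R$ is assumed. Incidentally, $\tfrac13$ is not needed for this particular step even in~$\mathbf G_2$; every root there sits in an $\mathbf A_2$-string, so the same commutator trick with constant $\pm1$ applies.

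A minor correction: over a semilocal ring the Gauss decomposition of $E$ is $U\cdot H\cdot V\cdot U$ (four pieces, as in Proposition~\ref{gauss}), not $U\cdot H\cdot V$. Your count of elementary factors should therefore read $3\lvert\Phi^+\rvert+6l$ rather than $2\lvert\Phi^+\rvert+6l$; this does not affect the existence of the bound~$N$, only its value.
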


This lemma has a very important corollary:

\begin{corollary}\label{G-E}
Under assumptions of the previous lemma the elementary subgroup $E=E_\pi(\Phi,R)$ is interpretable in a Chevalley group $G=G_\pi(\Phi,R)$ without parameters with the same interpretation code for all Chevalley groups of the same types. 

In particular, if two Chevalley groups of the same type are elementarily equivalent, then their elementary subgroups are elementarily equivalent too.
\end{corollary}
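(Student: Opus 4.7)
The plan is to turn the bounded commutator width supplied by Lemma~\ref{commutant} into an explicit parameter-free first-order definition of $E$ inside $G$. Because the integer $N=N(\Phi)$ from that lemma depends only on the root system, and not on the ring $R$ or on the representation $\pi$, the sentence
$$
\varphi_\Phi(x)\;\equiv\;\exists a_1,b_1,\ldots,a_N,b_N\;\bigl(x = [a_1,b_1]\,[a_2,b_2]\cdots[a_N,b_N]\bigr)
$$
is a single first-order formula in the pure language of groups that makes sense in every Chevalley group $G_\pi(\Phi,R)$ satisfying the hypotheses on $R$.

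By Lemma~\ref{commutant}, for any such $G=G_\pi(\Phi,R)$ the set of elements $g\in G$ with $G\vDash \varphi_\Phi(g)$ is exactly $[G,G]=E_\pi(\Phi,R)$. This already gives the interpretation code $\Gamma$: take the domain to be the solution set of $\varphi_\Phi$ in $G$, take equality to be that of $G$, and take multiplication and inversion on $E$ to be the restrictions of the corresponding operations of $G$, which are defined by atomic group-theoretic formulas. Since $\varphi_\Phi$ uses no parameters and the interpretation is uniform in $R$ and in $\pi$, this is an absolute interpretation with the same code for all Chevalley groups of type $\Phi$ under the listed restrictions.

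The ``in particular'' assertion is then a formal consequence of parameter-free interpretability: every sentence $\psi$ in the language of $E$ can be translated, using $\Gamma$, into a sentence $\psi^{\sharp}$ in the language of $G$ so that $E\vDash \psi$ iff $G\vDash \psi^{\sharp}$; this translation depends only on $\Gamma$, hence is the same for all $G_\pi(\Phi,R)$. Applied to two Chevalley groups $G_1=G_\pi(\Phi,R_1)$ and $G_2=G_\pi(\Phi,R_2)$ of the same type with $G_1\equiv G_2$, the translation immediately yields $E_\pi(\Phi,R_1)\equiv E_\pi(\Phi,R_2)$.

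Strictly speaking, there is no real obstacle here: the content of the corollary is entirely in the uniform commutator-width bound $N(\Phi)$, which is precisely what upgrades ``$E$ is the commutator subgroup of $G$'' from an infinite disjunction over all word-lengths to the single first-order formula $\varphi_\Phi$. The only point one must verify is that $N$ genuinely does not depend on $R$ or $\pi$, and this is built into the statement of Lemma~\ref{commutant}.
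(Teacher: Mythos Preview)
Your argument is correct and is exactly the intended one: the paper does not spell out a proof of this corollary, treating it as an immediate consequence of Lemma~\ref{commutant}, and the mechanism you write down --- using the uniform bound $N=N(\Phi)$ to replace ``commutator subgroup'' by the single parameter-free formula $\varphi_\Phi$ --- is precisely that intended consequence. There is nothing to add.
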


An elementary adjoint Chevalley group $E_{\ad} (\Phi, R)$ is always the quotient group of $E_{\pi}(\Phi, R)$ by its center. Therefore the adjoint Chevalley group $E_{\ad} (\Phi, R)$ is absolutely interpretable in the initial Chevalley group $G=G_\pi(\Phi,R)$  for our case, so if  two Chevalley groups of the same type are elementarily equivalent, then the corresponding adjoint elementary Chevalley groups are elementarily equivalent too.

The next important introductory step is to show that the subgroup $E_J=E_{\ad}(\Phi,R,J)$, where $J$ is the radical of~$R$, is absolutely definable in $E_{\ad}(\Phi,R)$ (and so in the initial Chevalley group $G_\pi(\Phi,R)$). This group is generated by all $x_\alpha(t)$, $\alpha \in \Phi$, $t\in J$, 
and is the greatest (unique maximal) proper normal
subgroup of $E= E_{\ad}(\Phi, R)$ (see~\cite{A6}).

The following proposition was proved in~\cite{Bunina-local}:

\begin{proposition}[see Proposition~3 from~\cite{Bunina-local}]
The subgroup $E_J=E_{\ad}(\Phi,R,J)$ is absolutely definable in
$E=E_{\ad}(\Phi,R)$, if $R$ is as in Lemma~\ref{commutant}.
\end{proposition}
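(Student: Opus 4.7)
The plan is to exploit the fact, recalled from Abe~\cite{A6}, that $E_J$ is the unique maximal proper normal subgroup of $E=E_{\ad}(\Phi,R)$, and to translate this structural fact into an absolute first-order definition via \emph{uniformly bounded normal generation}. Since reduction modulo~$J$ gives a surjection $E_{\ad}(\Phi,R)\twoheadrightarrow E_{\ad}(\Phi,R/J)$ with kernel exactly~$E_J$, and since the target is a simple group under the stated restrictions on $\charr R/J$, every $g\in E\setminus E_J$ has normal closure $\langle g\rangle^E$ equal to all of~$E$, whereas $g\in E_J$ forces $\langle g\rangle^E\subseteq E_J\subsetneq E$. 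Thus set-theoretically $E_J=\{g\in E:\langle g\rangle^E\neq E\}$; the task is to replace this condition by a bounded one.

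The technical core is to produce a constant $M=M(\Phi)$, depending only on the root system and not on~$R$ or on the representation, such that for every $g\in E\setminus E_J$ every element of~$E$ is a product of at most $M$ conjugates of $g^{\pm 1}$. I would argue in three stages. First, using commutators $[g,x_\alpha(\xi)]$ with strategically chosen $\alpha\in\Phi$ and $\xi\in R$ together with the Chevalley commutator formulas, and noting that the image of~$g$ in $E_{\ad}(\Phi,R/J)$ is a nontrivial element of a simple group, one extracts a root element $x_\alpha(u)$ with $u\in R^*$ as a bounded product of conjugates of $g^{\pm 1}$. Second, since~$R$ is local every element of~$R$ is a short sum of units, so every $x_\alpha(t)$, $t\in R$, is a bounded product of such $x_\alpha(u)$'s, hence of conjugates of $g^{\pm 1}$. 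Third, Lemma~\ref{commutant} bounds the number of commutators of elementary generators needed to express an arbitrary element of~$E$; composing the three uniform bounds yields $M(\Phi)$.

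Once $M$ is secured, the subgroup $E_J$ is defined absolutely inside $E$ by the formula
$$\varphi(g)\;\equiv\;\exists h\in E\ \bigwedge_{\bar\varepsilon\in\{-1,0,1\}^M}\forall a_1,\dots,a_M\ \bigl(h\neq a_1g^{\varepsilon_1}a_1^{-1}\cdots a_Mg^{\varepsilon_M}a_M^{-1}\bigr),$$
where $g^0:=1$ by convention, so the values $\varepsilon_i=0$ permit products of fewer than $M$ conjugates, and the outer conjunction over the finite set $\{-1,0,1\}^M$ keeps the formula first-order. By Corollary~\ref{G-E} the same formula, composed with the interpretation of $E$ inside $G=G_\pi(\Phi,R)$, defines $E_J$ absolutely in~$G$ as well, uniformly across all Chevalley groups of the fixed type.

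The main obstacle is the second step: extracting a bound $M$ that is genuinely independent of the ring. Inside any fixed group the commutator manipulations are routine, but an absolute constant requires a uniform case analysis on the possible Bruhat-type shapes of the image $\bar g\in E_{\ad}(\Phi,R/J)\setminus\{1\}$ and careful bookkeeping of how units in~$R^*$ propagate through the commutator expansions from the first stage. Once this uniform bound is in hand, the rest of the argument is a direct transcription.
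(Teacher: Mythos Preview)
Your approach is conceptually sound but takes a genuinely different route from the paper's, and the route you chose has a harder obstacle than the paper's. You characterize $E_J$ directly as the set of elements that fail to boundedly normally generate~$E$: this is correct \emph{provided} you can establish a uniform bound $M(\Phi)$ such that every $g\notin E_J$ yields all of~$E$ as products of at most $M$ conjugates of $g^{\pm1}$. That is exactly uniform bounded normal generation for $E_{\ad}(\Phi,R)$ over local rings, and your Stage~1---extracting a root unipotent $x_\alpha(u)$ with $u\in R^*$ from an arbitrary $g\notin E_J$---is the crux. The na\"ive reduction to the residue field does not deliver this: lifting an identity $\overline{x_\alpha(1)}=\prod_i \bar a_i\,\bar g^{\pm1}\bar a_i^{-1}$ from $E_{\ad}(\Phi,R/J)$ back to~$E$ produces $x_\alpha(1)\cdot z$ with an uncontrolled factor $z\in E_J$, not a genuine root element with unit parameter. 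Eliminating that~$z$ is precisely the ``uniform case analysis on Bruhat shapes'' you flag as the main obstacle, and you do not carry it out; so as written, Stage~1 is a genuine gap rather than a routine bookkeeping step.

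The paper sidesteps this obstacle entirely. Rather than proving that every $g\notin E_J$ boundedly generates~$E$, it works from the inside: it introduces a formula $\varphi_N(A)$ asserting that the set of $N$-fold products of conjugates of~$A$ already forms a proper normal subgroup of~$E$, observes that $\varphi_N(A)$ forces $A\in E_J$, and then argues that for $N$ depending only on~$\Phi$ the formula is satisfied by all \emph{generators} $x_\alpha(u)$, $u\in J$. Since $E_J$ has bounded width in these generators, one then defines $E_J$ as the set of products of at most~$M$ elements satisfying~$\varphi_N$. The bounded-generation input required here concerns only the explicit congruence-type normal closures $\langle x_\alpha(u)\rangle^E$ for $u\in J$, not the normal closure of an arbitrary $g\notin E_J$; this is a strictly lighter requirement than your Stage~1. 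Your approach, if Stage~1 were completed, would yield a cleaner single-formula definition of~$E_J$, but at the cost of proving a substantially stronger uniform statement than the paper needs.
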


For the sake of completeness we provide the reader with a sketch of the main ideas of our proof.

\begin{proof}
The main idea is that some elements $A$ of~$E_J$ are definable by the formula~$\varphi_N$, stating that  all products of its conjugates of the length~$N$ form a normal subgroup in~$E$ which does not coincide with~$E$.

If for a given~$A$ and some length~$N$ the formula $\varphi_N(A)$  is true, then 
 the minimal normal subgroup of~$E$, which contains~$A$, is a proper
subgroup of~$E$. 
As we know, every proper normal subgroup of~$E$ is
contained in~$E_J$, therefore $A\in E_J$. 
For rather a big~$N$ (depending only of~$\Phi$) the formula $\varphi_N$ 
 characterises elements of~$E_J$, maybe not all of them, but at least all generators $A=x_\alpha(u), u\in J$.

Let us fix the minimal natural~$N$ such that if for some~$A$ the
sentence $\varphi_N(A)$ does not hold, then for this~$A$ no sentence
$\varphi_p(A)$, $p> N$, holds.

Taking into account this~$N$ let us find  natural~$M$ such that:

(1)  products $X_1\dots X_k$, $k\leqslant M$ of elements of~$E$,
satisfying $\varphi_N(X)$,  form a subgroup in~$E$;

(2) this subgroup is normal;

(3) this subgroup is not trivial;

(4) this subgroup does not coincide with the whole group~$E$.

Clear that all these assertions are first order definable.

This number $M$ means  that every
element of the group $E_J$ is generated by not more than
$M$~elements $x_\alpha(u)$, $u\in J$.

Now the formula
$$
 Normal_{M,N}(X):= \bigwedge_{i=0}^M \exists X_1\dots
\exists X_i(\varphi_N(X_1)\land \dots \land \varphi_N(X_i)\land
X=X_1\dots X_i)
$$
defines in~$E$ the subgroup~$E_J$.
\end{proof}

\medskip

We proved that 
 the subgroup $E_J$ is absolutely definable in the
group~$E$, then, taking the quotient group  $E/E_J$, we obtain the Chevalley
group $\widetilde E\cong E_{\ad}(R/J)$, i.e., a Chevalley group
over field, which is also absolutely interpretable in $E=E_{\ad}(\Phi,R)$ (and also in the initial $G_\pi (\Phi,R)$).

\medskip

Recall that we have a root system~$\Phi$ of rank $>1$. The set of
simple roots is denoted by~$\Delta$, the set of positive roots is
denoted by~$\Phi^+$. The subgroup $U=U(R)$ of the Chevalley group
$G$ ($E$) is generated by elements $x_\alpha(t)$, $\alpha\in
\Phi^+$, $t\in R$, the subgroup $V=V(R)$ is generated by elements
$x_{-\alpha}(t)$, $\alpha\in \Phi^+$ $t\in R$.

For invertible $t\in R^*$ by $w_\alpha(t)$ we denote
$x_\alpha(t)x_{-\alpha}(-t^{-1}) x_\alpha (t)$, by $h_\alpha(t)$ we
denote $w_\alpha(t)w_\alpha(1)^{-1}$.

The group $H=H(R)$ is generated by all $h_\alpha(t)$, $\alpha\in
\Phi$, $t\in R^*$.

The strict proof of the following proposition can be found in~\cite{Bunina-local} as well. The first  assertion is well-known, but we also need assertions (2) and (3) for our purpose.

\begin{proposition}[Gauss decomposition and first order formulas]\label{gauss}

\emph{(1)} Every element $x$ of a Chevalley group $G$ $($respectively, of its elementary subgroup $E)$ over a
local ring~$R$ can be represented in the form
$$
x=utvu' \quad (\text{respectively, }x=uhvu'),
$$
where $u,u'\in U(R)$, $v\in V(R)$, $t\in T(R)$, $h\in H(R)$;

\emph{(2)} There exists a first order formula $\varphi(\dots)$ of the ring language and $6n+2l$ arguments, such that for decompositions $x_1=u_1t_1v_1u_1'$ and
$x_2=u_2t_2v_2u_2'$, where
\begin{align*}
u_i&= x_{\alpha_1}(t_1^{(i)})\dots x_{\alpha_n}(t_n^{(i)}),\\
u_i'&= x_{\alpha_1}(s_1^{(i)})\dots x_{\alpha_n}(s_n^{(i)}),\\
v_i&= x_{-\alpha_1}(r_1^{(i)})\dots x_{-\alpha_n}(r_n^{(i)}),\\
t_i&= h_{\alpha_1}(\xi_1^{(i)})\dots h_{\alpha_l}(\xi_l^{(i)}),\quad
i=1,2,
\end{align*}
this formula
\begin{multline*}
\varphi(t_1^{(1)},\dots,t_n^{(1)},t_1^{(2)},\dots,t_n^{(2)},
s_1^{(1)},\dots,s_n^{(1)},s_1^{(2)},\dots,s_n^{(2)},\\
r_1^{(1)},\dots,r_n^{(1)},r_1^{(2)},\dots,r_n^{(2)},
\xi_1^{(1)},\dots,\xi_n^{(1)},\xi_1^{(2)},\dots,\xi_n^{(2)}),
\end{multline*}
is true if and only if
$$
x_1=x_2;
$$

\emph{(3)} Similarly, there exists a first order formula $\psi(\dots)$ of the ring language and $9n+3l$ arguments, such that for decompositions $x_1=u_1t_1v_1u_1'$,
$x_2=u_2t_2v_2u_2'$ and $x_3=u_3t_3v_3u_3'$, where
\begin{align*}
u_i&= x_{\alpha_1}(t_1^{(i)})\dots x_{\alpha_n}(t_n^{(i)}),\\
u_i'&= x_{\alpha_1}(s_1^{(i)})\dots x_{\alpha_n}(s_n^{(i)}),\\
v_i&= x_{-\alpha_1}(r_1^{(i)})\dots x_{-\alpha_n}(r_n^{(i)}),\\
t_i&= h_{\alpha_1}(\xi_1^{(i)})\dots h_{\alpha_l}(\xi_l^{(i)}),\quad
i=1,2,3,
\end{align*}
this formula $
\psi(t_1^{(i)},\dots,t_n^{(i)}, s_1^{(i)},\dots,s_n^{(i)},
r_1^{(i)},\dots,r_n^{(i)}, \xi_1^{(i)},\dots,\xi_n^{(i)}),$
holds if and only if
$$
x_3=x_1\cdot x_2.
$$
\end{proposition}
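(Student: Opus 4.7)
The plan is to treat parts~(2) and~(3) as a uniform consequence of the Chevalley (Steinberg) commutator relations together with the uniqueness of the Gauss decomposition over local rings. Part~(1) is classical and already recorded in \cite{Abe1}, \cite{AS}, \cite{M}; I would therefore concentrate on showing that equality and multiplication of elements presented in Gauss form can each be re-expressed as a polynomial condition on the parameters, and hence as a first-order formula in the ring language.

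The central observation I would use is that every word of bounded length in the Chevalley generators $x_\alpha(\xi)$ and $h_\alpha(u)$ can be pushed into canonical Gauss form $u\,t\,v\,u'$ by iterated application of three families of polynomial-in-parameter identities: the Chevalley commutator formula $[x_\alpha(s),x_\beta(t)] = \prod x_{i\alpha+j\beta}\bigl(N^{\alpha\beta}_{ij}\, s^i t^j\bigr)$; the toral action $h_\alpha(u)\,x_\beta(\xi)\,h_\alpha(u)^{-1} = x_\beta\bigl(u^{\langle\beta,\alpha\rangle}\xi\bigr)$; and the defining identities $w_\alpha(t) = x_\alpha(t)\,x_{-\alpha}(-t^{-1})\,x_\alpha(t)$, $h_\alpha(t) = w_\alpha(t)\,w_\alpha(1)^{-1}$. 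Each rewriting step replaces the current parameter list by a fixed polynomial (or Laurent-polynomial in the parameters required to be units) function of the previous ones, so after a bounded number of steps, depending only on~$\Phi$, the word is rewritten as a Gauss product whose $3n+l$ parameter entries are explicit polynomial expressions in the original inputs.

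For part~(2) I would reduce $x_1 = x_2$ to $x_1 x_2^{-1} = 1$. Rewriting $x_2^{-1}$ in Gauss form (its parameters are polynomial in those of $x_2$), multiplying by $x_1$, and normal-ordering as above produces a canonical Gauss-form expression whose $3n+l$ output parameters are polynomials $P_1,\dots,P_{3n+l}$ in the $6n+2l$ input parameters. By uniqueness of the Gauss decomposition in the big cell, which is available because $R$ is local and every pivot of the form $1+j$, $j \in J$, is a unit, we have $x_1 x_2^{-1} = 1$ if and only if each $P_\nu$ takes its neutral value (zero in the unipotent blocks, one in the torus block). The conjunction of these polynomial identities is the required first-order ring formula $\varphi$ in $6n+2l$ variables. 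Part~(3) is handled identically: normal-ordering the Gauss product $x_1 x_2$ yields polynomials $Q_1,\dots,Q_{3n+l}$ in $6n+2l$ variables, and the equation $x_3 = x_1 x_2$ is equivalent to the requirement that each $Q_\nu$ agree with the $\nu$-th parameter of $x_3$, which gives a ring-language formula $\psi$ in $9n+3l$ arguments.

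The main obstacle is bookkeeping rather than conceptual: one must check that the re-ordering procedure terminates after a number of steps depending only on $\Phi$, that the pivot elements one has to invert during the rewriting (for instance the $t$ appearing inside $w_\alpha(t)$ and the diagonal entries produced along the way) stay in $R^*$ throughout, and that the uniqueness of the canonical form holds in precisely the variant needed to recover equality of Gauss presentations. The local hypothesis on $R$, together with the listed invertibility of $2$ and $3$ for the small root systems, guarantees all of these; the detailed form of the polynomials $P_\nu$ and $Q_\nu$ is worked out in \cite{Bunina-local}, and I would appeal to that reference for the verification rather than rederive the commutator coefficients here.
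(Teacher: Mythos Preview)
The paper does not prove this proposition in the text; it simply records that the strict proof is in \cite{Bunina-local}, so there is no in-paper argument to compare against beyond that citation. Your sketch, however, has a genuine gap in the normal-ordering step. To rewrite $x_1x_2^{-1}$ (or $x_1x_2$) into Gauss form via the Steinberg relations you must at some point exchange a factor $x_\alpha(a)$ with a factor $x_{-\alpha}(b)$, and the relevant identity in the rank-one subgroup involves $(1\pm ab)^{-1}$; for arbitrary inputs nothing forces this pivot to be a unit, so the output parameters cannot be given by a single list of polynomials $P_\nu$ as you claim. Your remark that ``every pivot of the form $1+j$, $j\in J$, is a unit'' does not help, since the intermediate pivots need not lie in $1+J$. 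A secondary issue is that the $utvu'$ form is not unique (only the big-cell form $utv$ is, as the paper itself notes later in Section~3.1), so ``all output parameters are neutral'' is not automatically equivalent to ``the element equals~$1$'' without a further argument about the specific rewriting procedure.

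The standard fix, and almost certainly the argument behind \cite{Bunina-local}, bypasses normal-ordering entirely: work in the faithful matrix representation of $G_\pi(\Phi,R)$. Each $x_\alpha(t)$ has matrix entries that are polynomials in~$t$, and each $h_\alpha(\xi)$ has entries that are Laurent monomials in~$\xi$, so every entry of $u_i t_i v_i u_i'$ is a polynomial in the $t_j^{(i)},s_j^{(i)},r_j^{(i)}$ and a Laurent polynomial in the $\xi_j^{(i)}$. Then $x_1=x_2$ (respectively $x_3=x_1x_2$) is exactly entrywise equality of the corresponding matrices, a finite conjunction of ring equations once one clears the unit denominators $\xi_j^{(i)}$. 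This yields $\varphi$ and $\psi$ directly, with no case splits on invertibility of intermediate quantities and no appeal to uniqueness of any decomposition.
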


\section{Regular bi-interpretability  of Chevalley groups over local rings}\leavevmode

Now we will prove Theorem~\ref{th-reg-bi-interpret}.
Our  proof  consists of three steps:

(1) Prove that all elementary unipotent subgroups $X_\alpha =\{ x_\alpha(t)\mid t\in R\}$ are definable in $G= E_{\ad}(\Phi, R)$ with parameters $\overline x=\{ x_\alpha(1)\mid \alpha\in \Phi\}$.

(2) Prove that a Chevalley groups $G_\pi(\Phi, R)$ (or an elementary Chevalley group $E_\pi(\Phi,R)$) is bi-interpretable with the ring $R$ with parameters $\overline x=\{ x_\alpha(1)\mid \alpha\in \Phi\}$.

(3) Prove that our parameters $\overline x$ are definable, i.\,e., bi-interpretability is regular.

We will allocate one subsection for each step.

\subsection{Definability of elementary unipotent subgroups}\leavevmode

Definability of elementary unipotent subgroups $X_\alpha$ was proved in~\cite{Segal-Tent} for  integral domains, which include fields.
So we can suppose that all groups $X_\alpha$ are definable in the group $E_{\ad} (\Phi,R)$, if $R$ is an arbitrary field.

Let us show that a root subgroup $X_\alpha$ over a local ring is also definable.

We will define it as the intersection of the inverse image of a root subgroup of the Chevalley group $E'=E_{\ad}(\Phi, R/\Rad R)$ over the residue field  under canonical homomorphism and the set 
$$
\{ AB \mid \forall x \, ([x,x_\alpha(1)]=1 \Longleftrightarrow [A, x]=1\land [B,x]=1)\}.
$$

To begin with, let us understand which elements lie in the inverse image of the root subgroup.

Let  $x_{\alpha}(\overline t) \in E'$, we denote some of its inverse image under canonical homomorphism by~$g$. In $E=E_{\ad}(\Phi,R)$ there exists a Gauss decomposition (see Proposition~\ref{gauss} above). Using it we can represent   $g$ as the product of a bounded number of elements $x_{\alpha_{i}}(t)$ of the special form. Ordering  positive roots, without loss of  generality, we can assume that $\alpha$ is positive and it is $\alpha_{1}$. We have:
$$
 g = x_{\alpha_{1}}(r_{1})\ldots x_{\alpha_{n}}(r_{n}) h x_{-\alpha_{1}}(s_{1})\ldots x_{-\alpha_{n}}(s_{n})x_{\alpha_{1}}(t_{1})\ldots x_{\alpha_{n}}(t_{n}),
$$
where  $\alpha_{i}$ are positive roots, $r_{i}, s_{i},t_i \in R$, $h\in T(R)$. The image of~$g$ under canonical homomorphism has the form  (by the homomorphism properties):
$$
x_{\alpha_{1}}(\overline{r_{1}})\ldots x_{\alpha_{n}}(\overline{r_{n}}) \overline{h} x_{-\alpha_{1}}(\overline{s_{1}})\ldots x_{-\alpha_{n}}(\overline{s_{n}})x_{\alpha_{1}}(\overline{t_{1}})\ldots x_{\alpha_{n}}(\overline{t_{n}})=x_{\alpha_{1}}(\overline t),
$$
where $\overline{r_{i}}, \overline{s_{i}}, \overline t_i \in R/J$. Let us move elements with positive roots to the right side of equality:
$$
\overline{h}x_{-\alpha_{1}}(\overline{s_{1}})\ldots x_{-\alpha_{n}}(\overline{s_{n}}) =  x_{\alpha_{n}}(\overline{r_{n}})^{-1}\ldots x_{\alpha_{1}}(\overline{r_{1}})^{-1}x_{\alpha_{1}}(\overline t)x_{\alpha_{n}}(\overline{t_{n}})^{-1}\ldots x_{\alpha_{1}}(\overline{t_{1}})^{-1}.
$$
We obtained the equality  $tv = u$, where $t\in T$, $v \in V$, $u \in U$, where $V$ and $U$ are the subgroups, generated by negative and positive root unipotents, respectively.
Since  $TV \;\cap\; U = 1$, then both sides are equal to~$1$, that is, the Gauss decomposition of the original element consists only of the torus element, root unipotents with positive roots and root unipotents with negative roots, which arguments belong to the radical, so we can assume that
 $$
g=x_{\alpha_{1}}(r_{1})\ldots x_{\alpha_{n}}(r_{n})h \cdot x_{-\alpha_{1}}(s_{1})\ldots x_{-\alpha_{n}}(s_{n}) ,\quad s_1,\dots, s_n\in J,
$$
and its image is
 $$
\overline h \cdot x_{\alpha_{1}}(\overline{r_{1}})\ldots x_{\alpha_{m}}(\overline{r_{m}})=x_{\alpha_1}(\overline t),
$$
therefore
$$
g=x_{\alpha_{1}}(r_{1})\ldots x_{\alpha_{n}}(r_{n})h\cdot x_{-\alpha_{1}}(s_{1})\ldots x_{-\alpha_{n}}(s_{n}) ,\quad r_2,\dots, r_n, s_1,\dots, s_n\in J, \overline h=1.
$$
 
Thus we proved definability of $X_{\alpha_{1}}E_{J}$. In order to prove finally that the subgroup $X_{\alpha_{1}}(R)$ remains definable, we will prove the following important proposition:

\begin{proposition}
 Let $G_{\alpha_{1}} = \{g \in E \;|\; C_{G}(g) = C_{G}(x_{\alpha_{1}}(1)) \}$. Then
$$
X_{\alpha_{1}}(R) \supseteq  X_{\alpha_{1}}(R)E_{J} \;\cap\; G_{\alpha_{1}} \supseteq X_{\alpha_1}(R^*).
$$
\end{proposition}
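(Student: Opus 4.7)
The plan is to prove the two inclusions separately. For the right inclusion $X_{\alpha_1}(R^*) \subseteq X_{\alpha_1}(R)E_J \cap G_{\alpha_1}$, since $X_{\alpha_1}(R^*) \subseteq X_{\alpha_1}(R) \subseteq X_{\alpha_1}(R) E_J$ is trivial, the content is to show $C_G(x_{\alpha_1}(u)) = C_G(x_{\alpha_1}(1))$ for every $u \in R^*$. I would first establish the preparatory identity $C_G(x_{\alpha_1}(1)) = C_G(X_{\alpha_1}(R))$ by expanding $[g, x_{\alpha_1}(s)]$ via the Chevalley commutator formula applied to the Gauss decomposition of $g$: the result is a polynomial expression in $s$ whose vanishing at $s = 1$ forces vanishing for every $s$ in $R$, thanks to the unit hypothesis and the local-ring structure. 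Working in the adjoint case, $\alpha_1$ extends to a $\mathbb Z$-basis of $\Lambda_{\mathrm{root}}$, so for each $u \in R^*$ there is a character $\chi \in \mathrm{Hom}(\Lambda_{\mathrm{root}}, R^*)$ with $\chi(\alpha_1) = u$; formula~$(\ref{e4})$ gives $h(\chi) x_{\alpha_1}(1) h(\chi)^{-1} = x_{\alpha_1}(u)$, whence $C_G(x_{\alpha_1}(u)) = h(\chi) C_G(x_{\alpha_1}(1)) h(\chi)^{-1}$. Since $T(R)$ normalises every root subgroup, it normalises $C_G(X_{\alpha_1}(R))$, and the conjugation collapses to equality.

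For the left inclusion $X_{\alpha_1}(R)E_J \cap G_{\alpha_1} \subseteq X_{\alpha_1}(R)$, let $g = x_{\alpha_1}(r)\, e$ with $r \in R$, $e \in E_J$ and $C_G(g) = C_G(x_{\alpha_1}(1))$. Since $X_{\alpha_1}(R) \subseteq C_G(x_{\alpha_1}(1)) \subseteq C_G(g)$ and $x_{\alpha_1}(r)$ itself centralises $X_{\alpha_1}(R)$, the identity $[g, x_{\alpha_1}(s)] = x_{\alpha_1}(r) [e, x_{\alpha_1}(s)] x_{\alpha_1}(-r)$ implies $[e, x_{\alpha_1}(s)] = 1$ for all $s \in R$, so $e \in E_J \cap C_E(X_{\alpha_1}(R))$. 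Writing $e$ in its Gauss decomposition $e = u_0 h_0 v_0 u_0'$ with all constituents in the $J$-congruence part of the corresponding subgroup, I would apply the Chevalley commutator formula to the identities $[e, x_{\alpha_1}(s)] = 1$ to eliminate the $X_{-\alpha_1}$-component of $v_0$, pin down $h_0$ to lie in the kernel of $\chi \mapsto \chi(\alpha_1)$, and constrain the positive-root factors $u_0, u_0'$ to $C_U(X_{\alpha_1}(R))$.

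To discharge the remaining components and conclude $e \in X_{\alpha_1}(J)$ (so that $g \in X_{\alpha_1}(R)$), I would use the other inclusion $C_G(g) \subseteq C_G(x_{\alpha_1}(1))$. For each candidate non-$X_{\alpha_1}$ summand surviving in the Gauss decomposition of $e$, I would exhibit a test element $h \in G$ drawn from an opposite root subgroup $X_{-\beta}$ or a torus element that commutes with $g$ via the constraints computed above and show that $h$ fails to commute with $x_{\alpha_1}(1)$ (for example, because $[x_{-\beta}(t), x_{\alpha_1}(1)]$ lies in a root subgroup that is nontrivial modulo $J$). The resulting contradiction forces each such summand to vanish, leaving $e \in X_{\alpha_1}(J)$.

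The main obstacle I anticipate is this last step: systematically producing, for each possible spurious factor in the Gauss decomposition of $e$, a test element $h \in C_G(g)$ that detects it by failing to commute with $x_{\alpha_1}(1)$. This requires exploiting the rank $\geqslant 2$ hypothesis to ensure enough roots $\beta$ are available, the local-ring structure to reduce questions about nontriviality modulo $J$ to questions about the residue field, and the invertibility of $2$ (and of $3$ for $\mathbf G_2$) to keep the Chevalley commutator structure constants under control.
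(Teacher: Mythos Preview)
Your treatment of the right inclusion is more detailed than the paper's (which simply asserts it is ``clear''), and the torus-conjugation idea is sound in the adjoint setting, though the ``polynomial vanishing at $s=1$ forces vanishing for all $s$'' step would need care over a ring that is not a domain.

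For the left inclusion, however, your plan diverges from the paper's and leaves the decisive step essentially open. You test $g$ only against $X_{\alpha_1}(R)$, deduce $[e,x_{\alpha_1}(s)]=1$, and then propose to finish via the \emph{reverse} inclusion $C_G(g)\subseteq C_G(x_{\alpha_1}(1))$: for each surviving spurious factor of $e$, exhibit some $h$ that commutes with $g$ but not with $x_{\alpha_1}(1)$. You correctly flag this as the main obstacle, but give no mechanism for producing such $h$; manufacturing elements of $C_G(g)$ before $g$ is pinned down is exactly the hard part, and your proposal does not address it.

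The paper bypasses this difficulty entirely and never uses the inclusion $C_G(g)\subseteq C_G(x_{\alpha_1}(1))$. It exploits only the forward inclusion $C_G(x_{\alpha_1}(1))\subseteq C_G(g)$, but with a much \emph{richer} supply of test elements than $X_{\alpha_1}(R)$: every $x_\beta(1)$ with $\alpha_1+\beta\notin\Phi\cup\{0\}$ commutes with $x_{\alpha_1}(1)$, hence with $g$. Writing $g$ in its (unique) $UTV$-form and conjugating by each such $x_\beta(1)$ forces, via comparison of components, the $\gamma$-coefficient of $g$ to vanish whenever $\gamma+\beta\in\Phi\cup\{0\}$. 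Three short combinatorial lemmas (one each for simply laced systems, for $\mathbf{G}_2$, and one supplying torus test elements $h_\gamma(a)$ with $\gamma\perp\alpha_1$ to handle $\mathbf{B}_2$-type subsystems in the doubly laced cases) show that the set $B=\{\beta:\alpha_1+\beta\notin\Phi\cup\{0\}\}$ is already rich enough to kill every root component except $\alpha_1$, leaving $g=t\,x_{\alpha_1}(r_1)$ with $t$ central, hence trivial. The idea you are missing is precisely this: enlarge your family of test elements from $X_{\alpha_1}(R)$ to all of $\{x_\beta(1):\beta\in B\}$ (plus suitable $h_\gamma(a)$), and the reverse inclusion is never needed.
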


\begin{proof}
Clear that if $g\in X_{\alpha_1} (R^*)$, then $g\in G_{\alpha_1}$, therefore we will prove only the first inclusion.

Let $g \in X_{\alpha_{1}}(R)E_{J} \cap G_{\alpha_{1}} $.
Let us recall once again the type of elements $X_{\alpha_{1}}(R)E_{J}$ we have already proved :
$$
g=x_{\alpha_{1}}(r_{1})\ldots x_{\alpha_{n}}(r_{n})tx_{-\alpha_{1}}(s_{1})\ldots x_{-\alpha_{n}}(s_{n}) ,\quad r_2,\dots, r_n, s_1,\dots, s_n\in J.
$$
Note that, unlike the original Gauss decomposition, such a decomposition is uniquely defined. Indeed, suppose that
$$ 
u_{1}t_{1}v_{1} = u_{2}t_{2}v_{2}.
$$
If we move all the positive roots to one side:
$t_{1}v_{1}v_{2}^{-1}t_{2}^{-1} = u_{1}^{-1}u_{2}$, then since
$TV \;\cap\; U = 1$, so this decomposition is unique.

Let us stock up on an important formula
$$
x_\gamma(1) x_{-\gamma}(s) x_{\gamma} (1)^{-1}=h_\gamma \left(\frac{1}{1-s}\right) x_\gamma (s^2-s) x_{-\gamma}\left(\frac{s}{1-s}\right)\text{ for all }\gamma\in \Phi
$$
(it is checked directly trough a representation by matrices from $\SL_2$).
\medskip

We always suppose that the roots $\alpha_1,\dots, \alpha_n$ are ordered by their majority (first there are simple roots, then their sums, at the end there is  the major root).

Consider a root $\beta \in \Phi$ such that  $\alpha_{1} + \beta \notin \Phi$, and the element $x_{\beta}(1)$. Since $g \in G_{\alpha_{1}} $, then it commutes with $x_{\beta}(1)$. It means that  $g^{x_{\beta}(1)} = g$. Let us consider, how conjugation by this element acts on~$g$ and its separate multipliers (let for certainty $\beta$ be positive):
\begin{multline*}
g=g^{x_{\beta}(1)} = x_{\alpha_{1}}(r_{1})^{x_{\beta}(1)}\ldots x_{\alpha_{n}}(r_{n})^{x_{\beta}(1)}t^{x_{\beta}(1)}x_{-\alpha_{1}}(s_{1})^{x_{\beta}(1)}\ldots x_{-\alpha_{n}}(s_{n})^{x_{\beta}(1)}=\\
= x_{\alpha_{1}}(r_{1}) \dots ( x_{\alpha_i}(r_i)x_{\alpha_i+\beta}(c_i r_i)\dots ) \dots x_{\alpha_n}(r_n) \cdot (x_\beta(c)  t)\cdot\\
\cdot  (x_{-\alpha_1}(s_1)x_{-\alpha_1+\beta}(d_1 s_1)\dots) \dots \left(h_\beta\left( \frac{1}{1-s_{-\beta}}\right)x_\beta(s_{-\beta}^2-s_{-\beta})x_{-\beta}\left( \frac{s_{-\beta}}{1-s_{-\beta}}\right)\right)\dots\\
\dots (x_{-\alpha_n}(s_n)x_{-\alpha_n+\beta}(d_ns_n)\dots).
\end{multline*}
Let us  analyze the  obtained equality.

Note that to the left of $t$ are only unipotent elements with positive roots, that is, an element of~$U$. To the right of~$t$ we see the element of the torus $h_\beta\left( \frac{1}{1-s_{-\beta}}\right)$, which we can rearrange directly to~$t$ by changing only the arguments of the elements $x_\gamma(\cdot)$, we see unipotent elements with negative roots, as well as such $x_\gamma(\cdot)$ with $\gamma\in\Phi^+$, which, when moving to the left towards $T$ and $U$, cannot meet unipotents with the opposite root. This means that it is possible to rearrange all unipotents with positive roots that are to the right of~$t$, to the left of~$t$ so that $t$ and $h_\beta\left( \frac{1}{1-s_{-\beta}}\right)$ will not change. After that, the conjugate element will be written in the form of $UTV$, that is
$$
t=t\cdot h_\beta\left( \frac{1}{1-s_{-\beta}}\right), 
$$
so $s_{-\beta}=0$.

Thus, $x_\beta(\cdot)$ cannot appear in the right of~$t$ part of the expression during conjugation, while $x_\beta(\cdot)$ cannot appear in the left part of $x_\beta(\cdot)$ is only in the right place: $x_\beta (r_\beta)$. So, in the expression $x_\beta(c)$, which is obtained by conjugating the element $t$ with the element $x_\beta(1)$, there must be $c=0$, that is
$$
[t,x_\beta(1)]=1.
$$

Let us consider the part~$V$ after conjugation.

Conjugating $x_{-\alpha_i}(s_i)$, where $\alpha_i$ is older than~$\beta$, we can obtain additional unipotents from~$V$; but if $\alpha_i$ is younger then~$\beta$, then we can obtain additional unipotents from~$U$.

Consider the oldest root $\alpha_i$, for which $-\alpha_i+\beta\in \Phi$. Conjugating the corresponding unipotent we have
$$
x_{-\alpha_i}(s_i)^{x_\beta(1)}= x_{-\alpha_i}(s_i) x_{-\alpha_i+\beta} (c \cdot s_i) x_{-\alpha_i+2\beta} (\dots)\dots,
$$
where $c$ is an invertible integer (since it could be equal only to~$\pm 1, \pm 2$ or $\pm 3$, but in the second case we deal with root systems ${\mathbf B}_l,{\mathbf C}_l,{\mathbf F}_4, {\mathbf G}_2$ and $1/2\in R$ by our assumption, in the third case we deal with the root system ${\mathbf G}_2$ and also $1/3\in R$ by our assumption).
By the choice of $\alpha_i$ an element $x_{-\alpha_i+\beta} (c \cdot s_i)$ can no longer appear from any of the conjugates, therefore $c s_i=s_i=0$. By the next step let us consider the next (by majority) root  $\alpha_j$ such that $-\alpha_j+\beta\in \Phi$, and we will come to the same conclusion $s_j=0$. 

Thus, we will consistently understand that for any root~$\alpha_i$ for which $-\alpha_i+\beta\in\Phi$ the condition $s_i=0$ takes place. So, the element $x_\beta(1)$ commutes with all non-unit unipotents located to the right of~$t$, that is
$$
g=g^{x_{\beta}(1)} = x_{\alpha_{1}}(r_{1})^{x_{\beta}(1)}\ldots x_{\alpha_{n}}(r_{n})^{x_{\beta}(1)}tx_{-\alpha_{1}}(s_{1})\ldots x_{-\alpha_{n}}(s_{n}),
$$
therefore
$$
x_{\alpha_{1}}(r_{1})^{x_{\beta}(1)}\ldots x_{\alpha_{n}}(r_{n})^{x_{\beta}(1)}=x_{\alpha_{1}}(r_{1})\ldots x_{\alpha_{n}}(r_{n}),
$$
and we will come to the conclusion by the same arguments that $r_i=0$ for all $i$ such that $\alpha_i+\beta\in \Phi$.

Therefore for all roots $\beta\in \Phi$ such that $\beta+\alpha_1\notin \Phi\cup \{ 0\}$ if $\gamma+\beta\in \Phi\cup \{ 0\}$, then the corresponding unipotent $x_\gamma(t)=1$, i.\,e., $t=0$. 

We will show that in most root systems in this way we will be able to ``delete'' all root unipotents except the first one. We divide all root systems into three categories: simply laced root systems, systems with double relations, and ${\mathbf G}_2$.

Let us start with the ${\mathbf G}_2$ root system, so that we don't have to go back to it.

\begin{lemma}\label{G2}
Let $\Phi={\mathbf G}_2$,  $B = \{ \beta \in \Phi \;| \; \alpha_{1} + \beta \notin \Phi \cup \{ 0\} \}$. By deleting from the list of roots of the system $\Phi$ all elements of $\gamma$ such that $\exists \beta \in B\, (\beta+\gamma\in \Phi\cup \{ 0\})$,
we delete all roots except~$\alpha_1$.
\end{lemma}

\begin{proof}
Since $\alpha_1$ can be any of the roots of the system, we will first prove the lemma for $\alpha_1$ a short, and then for a long one.

If $\alpha_1$ is a short simple root,  then the system consists of the roots
$$
\pm \alpha_1, \pm \alpha_2, \pm (\alpha_1+\alpha_2), \pm (2\alpha_1+\alpha_2), \pm (3\alpha_1+\alpha_2), \pm (3 \alpha_1+2\alpha_2).
$$
In this case 
$$
B=\{ \alpha_1, -\alpha_2, 3\alpha_1+\alpha_2, \pm (3\alpha_1+2\alpha_2)\}.
$$
The roots $-\alpha_1, \alpha_2, -3\alpha_1-\alpha_2, \pm (3\alpha_1+2\alpha_2)$ are deleted, since they are opposite to the roots from~$B$; the roots $\pm (\alpha_1+\alpha_2), \pm (2\alpha_1+\alpha_2)$ are deleted according to the root~$\alpha_1\in B$, the root $-\alpha_2$ is deleted according to the root $3\alpha_1+2\alpha_2\in B$, $3\alpha_1+\alpha_2$ is deleted according to $-3\alpha_1-2\alpha_2\in B$. 
Therefore all roots except  $\alpha_1$ are deleted. 

In the case when  $\alpha_1$ is a long simple root, the consideration is absolutely similar (and even easier).
\end{proof}

\begin{lemma}\label{ADE}
Let $\Phi$ be a simply laced root system, $B$ be the same as in the previous lemma.
Again from the set of all root of  $\Phi$ all $\gamma$ such that $\exists \beta \in B\, (\beta+\gamma\in \Phi\cup \{ 0\})$ are all roots except~$\alpha_1$.
\end{lemma}

\begin{proof}
  To begin with, we note that all the roots $-\alpha_{2},\ldots,-\alpha_{l}$ belong to $B$, since for simple root $\alpha_{i}, \alpha_{j}$ the difference $\alpha_{i} - \alpha_{j} \notin \Phi$. 

For any non-simple positive root  $\theta$ there exists a simple root  $\gamma$ such that $\theta-\gamma \in \Phi^{+}$, therefore  $\theta$ can be deleted, since $-\gamma \in B$.

 The exception is the case when $\gamma=\alpha_{1}$. This case means that there exists a pair of positive roots $\{ \alpha_1, \beta=\theta-\alpha_1\}$, for which their sum is a root. It is clear that in a simply laced root system these roots generates the subsystem~${\mathbf A}_2$.
Then $\beta+\alpha_1\in \Phi$, $\beta+2\alpha_1\notin \Phi$, therefore $\theta=\beta+\alpha_1,-\beta\in B$, and consequently since $\alpha_1=\theta + (-\beta)\in \Phi$, then both roots are deleted.
Thus $\theta$ (any non-simple positive root) is deleted. 

All simple positive roots (except $\alpha_1$) are deleted, since they are opposite to the roots from~$B$. Therefore all positive roots except~$\alpha_1$ are deleted.

It is clear that the root $-\alpha_1$ is deleted.
Concerning other negative roots we mention that 
 any root  which is not collinear to~$\alpha_1$ can be positive under another choice of ordering (but with $\alpha_1$ simple). Therefore all roots except~$\alpha_1$ can be deleted.
\end{proof}

\begin{lemma}\label{h}
For any root system~$\Phi$ if $\langle \beta, \alpha_1\rangle =0$, $g \in X_{\alpha_{1}}(R)E_{J} \cap G_{\alpha_{1}}$, 
$$
g=x_{\alpha_1}(s_1)\dots x_{\alpha_n}(s_n) t x_{-\alpha_1}(r_1)\dots x_{-\alpha_n}(r_n),
$$
then
$$
[h_\beta( a) , x_{\alpha_i} (s_i)]=[h_\beta (a),x_{-\alpha_i}(r_i)]=1\text{ for all }i=1,\dots, n\text{ and }a\in R^*.
$$
\end{lemma}

\begin{proof}
Since
$$
h_\alpha (t) x_\gamma (s) h_\alpha (t)^{-1}=x_\gamma (t^{\langle \alpha,\gamma\rangle} \cdot s),
$$
then if $\langle \beta,\alpha_1\rangle =0$, then $h_\beta(a)$ commutes with $x_{\alpha_1}(1)$, consequently $h_\beta(a)$ commutes with~$g$. Then
$$
g=g^{h_\beta(a)}=x_{\alpha_1}(a^{\langle \beta,\alpha_1\rangle }s_1)\dots x_{\alpha_n}(a^{\langle \beta,\alpha_n\rangle }s_n) t x_{-\alpha_1}(a^{-\langle \beta,\alpha_1\rangle }r_1)\dots x_{-\alpha_n}(a^{-\langle \beta,\alpha_n\rangle }r_n),
$$
From the uniqueness of the decomposition for~$g$ we have
$$
[h_\beta( a) , x_{\alpha_i} (s_i)]=[h_\beta (a),x_{-\alpha_i}(r_i)]=1\text{ for all }i=1,\dots, n.
$$
\end{proof}

It remains for us to consider root systems with double relations (recall that in this case we suppose $1/2\in R$).

As above,  we can learn to delete only positive roots, since only the root $-\alpha_1$ cannot be made positive under any ordering in which~$\alpha_1$ is positive, however, $-\alpha_1$ is deleted due to the fact that $\alpha_1\in B$.

Consider an arbitrary positive root~$\beta$. If $\alpha_1+\beta\in \Phi$, then $\beta$ is deleted by~$\alpha_1$, therefore let us suppose that $\alpha_1+\beta\notin \Phi$. If $-\beta +\alpha_1\notin \Phi$, then $-\beta\in B$ and $\beta$ is deleted by~$-\beta$, so we can suppose that $\beta-\alpha_1\in \Phi$. Thus we can suppose that there exist roots  $\alpha_1$ and $\gamma=\beta-\alpha_1$ such that $\alpha_1+\gamma$ is a root, and $2\alpha+\gamma$ is not a root. It is clear that if the  roots $\alpha_1$ and $\gamma$ are enclosed in the system~${\mathbf A}_2$, then everything is good (see Lemma~\ref{ADE}), but if they are enclosed only to the system~${\mathbf B}_2$, then we need to have an additional consideration.

To finish all the cases, as we see, it remains simply to consider separately the root system~$B_2$ and prove that it is possible to delete all the roots in it (perhaps not only using the set~$B$, but also using the lemma~\ref{h}).

In the root system ${\mathbf B}_2$ there are roots
$$
\pm \mu, \pm \nu, \pm (\mu+\nu), \pm (2\mu +\nu),
$$
where the simple root $\mu$ is short, and $\nu$ is long.

If $\mu=\alpha_1$, then $\mu, -\nu, 2\mu+\nu\in B$, therefore $-\mu, \nu, \pm (\mu+\nu)$ and $-(2\mu+\nu)$ are deleted. The roots $-\nu$ and $2\mu+\nu$  can not be deleted according to~$B$, therefore we will use  $h_\alpha(-1)$.

Since the roots $\mu$ and $\mu+\nu$ are orthogonal, then by Lemma~\ref{h} we have
$$
[h_{\mu+\nu}(-1), x_{-\nu}(s)]=[h_{\mu+\nu}(-1),x_{2\mu+\nu}(r)]=1\text{ for  } s,r \text{ from decomposition of  }g.
$$
Since $\langle \mu+\nu, -\nu\rangle =-1$, then
$- s =s$, therefore $s=0$, what was required. Similarly for the root $2\mu +\nu$.

If $\nu=\alpha_1$, then $\nu, -\mu, \mu+\nu,\pm(2\mu+\nu)\in B$, therefore $\pm \mu, -\nu, \pm (\mu+\nu)$ and $\pm(2\mu+\nu)$ are deleted, what was required.

\end{proof}

Therefore commuting with all suitable  $x_{\beta}(1)$ and $h_\gamma(a)$,  our element comes to the form  $tx_{\alpha_{1}}(r_{1})$,  where  $t$ commutes with all $x_\beta(1)$ for  $\beta \in B$, which clearly gives that $t$ belongs to the group's center. Since for an adjoint elementary Chevalley group its center is trivial,  $t=1$ and the proposition is proved.

Therefore we proved the following theorem:

\begin{theorem}\label{th_define_rings}
If  $E=E_{\ad}(\Phi, R)$ is an elementary adoint Chevalley group of rank $> 1$, $R$ is a local ring $($with $\frac{1}{2}$ for the root systems ${\mathbf B}_l, {\mathbf C}_l, {\mathbf F}_4, {\mathbf G}_2$ and with  $\frac{1}{3}$ for  ${\mathbf G}_{2})$, then any root subgroup  $X_{\alpha}$, $\alpha\in \Phi$, is definable in~$E$ with parameters.
\end{theorem}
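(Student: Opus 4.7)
The plan is to reduce the theorem to the case of a single positive root $\alpha_{1}$ (the Weyl group conjugation then transports the definition to any other root, and the elements $w_{\beta}(1)$ act definably), and then exhibit a first-order definition of $X_{\alpha_{1}}(R)$ using the parameters $\overline{x}=\{x_{\alpha}(1)\mid \alpha\in\Phi\}$. The starting point is the result already available for fields (Segal--Tent): the root subgroups of $E_{\ad}(\Phi,R/J)$ are definable. Since the radical subgroup $E_{J}=E_{\ad}(\Phi,R,J)$ is absolutely definable in $E$ (proved in the preceding subsection), so is the preimage $X_{\alpha_{1}}(R)E_{J}$ of $X_{\alpha_{1}}(R/J)$ under the canonical reduction $E\to E_{\ad}(\Phi,R/J)$. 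It remains to cut $X_{\alpha_{1}}(R)E_{J}$ down to $X_{\alpha_{1}}(R)$.

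The key idea is to intersect with $G_{\alpha_{1}}=\{g\in E\mid C_{E}(g)=C_{E}(x_{\alpha_{1}}(1))\}$, which is clearly definable with parameter $x_{\alpha_{1}}(1)$. The intersection trivially contains $X_{\alpha_{1}}(R^{*})$, so the content is the reverse inclusion: every $g\in X_{\alpha_{1}}(R)E_{J}\cap G_{\alpha_{1}}$ lies in $X_{\alpha_{1}}(R)$. To analyze this, I would use Gauss decomposition (Proposition~\ref{gauss}) together with the fact that multiplication by $E_{J}$ forces the torus part to reduce to $1$ and the negative-root factors to have arguments in $J$; this produces a unique normal form $g=x_{\alpha_{1}}(r_{1})\cdots x_{\alpha_{n}}(r_{n})\cdot t\cdot x_{-\alpha_{1}}(s_{1})\cdots x_{-\alpha_{n}}(s_{n})$ with $r_{2},\dots,r_{n},s_{1},\dots,s_{n}\in J$.

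The main combinatorial work — and the chief obstacle — is to show that the requirement $[g,x_{\beta}(1)]=1$ for $\beta$ in the set $B=\{\beta\in\Phi\mid \alpha_{1}+\beta\notin\Phi\cup\{0\}\}$ forces all the ``unwanted'' factors to vanish. The strategy is to exploit the Chevalley commutator formula: conjugating by $x_{\beta}(1)$ produces extra unipotent terms with roots $\gamma+k\beta$, and by uniqueness of the Gauss-type decomposition these must cancel. I would argue inductively from the highest root downward, first eliminating the $x_{-\beta}(\cdot)$-contribution of $t$, and then forcing each $r_{i}$ and $s_{i}$ with $\alpha_{i}+\beta$ (resp.\ $-\alpha_{i}+\beta$) a root to be zero. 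A delicate point is ensuring that the integer structure constants appearing are invertible in $R$, which is exactly where the hypotheses $1/2\in R$ (for double bonds) and $1/3\in R$ (for ${\mathbf G}_{2}$) enter.

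The final step is to verify case by case that the elements of $B$ are numerous enough to ``delete'' every root other than $\alpha_{1}$. I expect this to split into three arguments: simply laced systems (where the pair $\{\alpha_{1},\gamma\}$ always spans an ${\mathbf A}_{2}$-subsystem and the analysis is uniform), ${\mathbf G}_{2}$ (which must be handled separately, but whose root set is small enough to enumerate), and the doubly laced systems, which reduce to the exceptional ${\mathbf B}_{2}$ configuration where $B$ alone is insufficient. For ${\mathbf B}_{2}$ the plan is to supplement conjugation by $x_{\beta}(1)$ with conjugation by torus elements $h_{\beta}(-1)$ using the formula $h_{\alpha}(t)x_{\gamma}(s)h_{\alpha}(t)^{-1}=x_{\gamma}(t^{\langle\alpha,\gamma\rangle}s)$; an orthogonal choice of $\beta$ forces the remaining $s_{i},r_{i}$ to satisfy $-s=s$ and hence vanish (using $1/2\in R$). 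After all reductions, $g$ collapses to $t\cdot x_{\alpha_{1}}(r_{1})$ with $t$ central, and since the adjoint elementary Chevalley group has trivial center, $t=1$ and $g\in X_{\alpha_{1}}(R)$, completing the proof.
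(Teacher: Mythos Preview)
Your outline follows the paper's proof almost exactly: reduce to the residue field via the definable radical subgroup $E_J$, pull back the (already known) root subgroup of $E_{\ad}(\Phi,R/J)$ to the definable set $X_{\alpha_1}(R)E_J$, intersect with the centralizer-defined set $G_{\alpha_1}$, and then use conjugation by $x_\beta(1)$ for $\beta\in B=\{\beta:\alpha_1+\beta\notin\Phi\cup\{0\}\}$ together with uniqueness of the $UTV$-form to kill the unwanted factors. Your case split (simply laced, ${\mathbf G}_2$, doubly laced reduced to a ${\mathbf B}_2$ calculation) and the use of $h_\beta(-1)$ in the ${\mathbf B}_2$ case are precisely what the paper does in Lemmas~\ref{G2}, \ref{ADE} and~\ref{h}.

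There is, however, one genuine step missing. The intersection $X_{\alpha_1}(R)E_J\cap G_{\alpha_1}$ is \emph{not} in general equal to $X_{\alpha_1}(R)$; the paper's Proposition only asserts the sandwich
\[
X_{\alpha_1}(R^*)\ \subseteq\ X_{\alpha_1}(R)E_J\cap G_{\alpha_1}\ \subseteq\ X_{\alpha_1}(R),
\]
and the left inclusion cannot be improved over a local ring with zero divisors. For example, over $R=k[\varepsilon]/(\varepsilon^2)$ the element $x_{\alpha_1}(\varepsilon)$ is centralized by every $h_{\alpha_1}(u)$ with $u^2-1\in(\varepsilon)$, hence $C_E(x_{\alpha_1}(\varepsilon))\supsetneq C_E(x_{\alpha_1}(1))$ and $x_{\alpha_1}(\varepsilon)\notin G_{\alpha_1}$. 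The paper closes this gap by defining $X_{\alpha_1}(R)$ as the intersection of $X_{\alpha_1}(R)E_J$ with the set of \emph{products} $\{AB:C_E(A)\cap C_E(B)=C_E(x_{\alpha_1}(1))\}$; since $R$ is local, every $r\in J$ decomposes as $(1+r)+(-1)$ with both summands units, so $X_{\alpha_1}(R^*)\cdot X_{\alpha_1}(R^*)$ already covers $X_{\alpha_1}(J)$ and hence all of $X_{\alpha_1}(R)$. You need this (or an equivalent) extra step to finish.
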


\subsection{Bi-interpretability with parameters of $G(R)$ and $R$}\leavevmode

Now we will concentrate to prove that a group $G_\pi (\Phi,R)$ (respectively, $E_\pi(\Phi,R)$) and the corresponding local ring $R$ are bi-interpretable (with parameters~$\overline x$), or more strictly that the pair $\langle G_\pi (\Phi,R), \overline x\rangle$ $(\langle E_\pi (\Phi,R), \overline x\rangle)$ and the ring~$R$ are absolutely bi-interpretable.

Since in the paper~\cite{Segal-Tent} of Segal and Tent the similar theorem is proved for integral domains, we will use it and notice that the similar arguments work in our case.

Following \cite{Segal-Tent} a bi-interpretation between $R$ and $G(R)$ has several steps, some of them are already proved, some of them are similar to Segal--Tent, some of them are clear.

\smallskip

{\bf Step 1.} An interpretation of $E(R)$ in $G(R)$ is a very simple absolute interpretation of the  bounded commutant group in a group. Also if we speak about absolute interpretation of $\langle E_\pi (\Phi,R), \overline x\rangle$ in  $\langle G_\pi (\Phi,R), \overline x\rangle)$, the parameters in one group coincide with the parameters in another one (since they always belong to the commutant).

\smallskip

{\bf Step 2.} An interpretation of $E_{\ad} (R)$ in $E(R)$ is also a very simple absolute interpretation of the quotient group of a group by it center. If we consider an interpretation of $\langle E_{\ad} (\Phi,R), \overline {\widetilde x}\rangle$ in $\langle E_\pi (\Phi,R), \overline x\rangle$, the parameters $ \overline {\widetilde x}$ are the quotient classes of the initial $\overline x$.

\smallskip

{\bf Step 3.}  An interpretation of $R$ in $E_{\ad} (R)$ (as in~\cite{Segal-Tent})  consists of an identification
of~$R$ with a definable abelian subgroup $R'$ of  $E_{\ad} (R)$ such that addition $\oplus$ in $R'$ is
the group operation in $E_{\ad} (R)$; and multiplication $\otimes$ in $R'$ is definable in $E_{\ad} (R)$. We will not repeat it here, since is absolutely the same arguments as in \cite{Segal-Tent} and the papers about Matsev-type theorems.

\smallskip

{\bf Step 4.} An interpretation of $G'\cong G (R)$ (or $G'\cong E(R)$)  in~$R$: according to the Gauss decomposition every element $g\in G'$  can be represented as the product of four elements $u_1 \in U$, $t\in T$, $v\in V$, $u_2\in U$,  
where $u_1,v,u_2$ uniquely correspond to vectors of the length~$n$ of elements from~$R$, $t$ uniquely corresponds to a vector of the length~$l$ of elements from~$R^*$.  Since equality and multiplication of two elements expressed by  these four vectors is first order defined, we have absolute interpretation $G'$ of $G (R)$ (or $E(R)$)  in~$R$.

For the group $E(R)$ we always use this type of interpretation, but for the group G(R) it is possible also to repeat all arguments from~\cite{Segal-Tent}, since they do not depend of any properties of a given ring.

\smallskip

{\bf Step 5.} As in \cite{Segal-Tent} we define an isomorphism $G(R)\to G(R')$ or $E(R)\to E(R')$, it is definable according to Proposition~\ref{gauss}.

\medskip

{\bf Step 6.} An isomorphism $R\to R'$ is  the same as in the paper~\cite{Segal-Tent}.

Therefore we proved the following important theorem:

\begin{theorem}\label{th_bi-interpret}
If  $G(R)=G_\pi (\Phi,R)$ $(E(R)=E_{\pi}(\Phi, R))$ is an (elementary)  Chevalley group of rank $> 1$, $R$ is a local ring  $($with $\frac{1}{2}$ for the root systems ${\mathbf B}_l, {\mathbf C}_l, {\mathbf F}_4, {\mathbf G}_2$ and with  $\frac{1}{3}$ for  ${\mathbf G}_{2})$, $\overline x=\{ x_\alpha(1)\mid \alpha\in \Phi\}$, then the pair $(G(R), \overline x)$ $($or $(E(R),\overline x))$ is absolutely bi-interpretable with~$R$.
\end{theorem}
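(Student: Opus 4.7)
The plan is to follow the six-step skeleton outlined above the theorem, with the technical bulk concentrated in (i) defining a first-order copy of $R$ inside $G(R)$ using the parameters $\overline{x}$, and (ii) checking that the two composite interpretations can be unwound by formulas that use only those parameters. Concretely, bi-interpretability with parameters $\overline{x}$ demands an interpretation $\Gamma$ of $R$ in $(G(R),\overline{x})$, an absolute interpretation $\Delta$ of $G(R)$ in $R$, and two definable isomorphisms $R \to (\Delta\circ\Gamma)(R)$ and $G(R) \to (\Gamma\circ\Delta)(G(R))$.

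First I would reduce from $G = G_\pi(\Phi,R)$ to the adjoint elementary group $E_{\ad}(\Phi,R)$: by Corollary~\ref{G-E} the subgroup $E=[G,G]$ is uniformly absolutely interpretable in $G$, and quotienting by its absolutely definable center produces $E_{\ad}$. Under these reductions the parameters $\overline{x}$ descend to well-defined images, so it suffices to interpret $R$ inside $(E_{\ad},\overline{x})$. Theorem~\ref{th_define_rings} already gives each root subgroup $X_\alpha$ as a definable set with parameters $\overline{x}$, and fixing one root $\alpha$ the correspondence $t \mapsto x_\alpha(t)$ is a bijection $R \to X_\alpha$ carrying addition in $R$ to the group operation on $X_\alpha$. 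To recover the ring multiplication I would appeal to the Chevalley commutator formula: choosing $\beta$ with $\alpha+\beta\in\Phi$ (available because $\Phi$ has rank $\geqslant 2$), the identity
$$
[x_\alpha(s),x_\beta(r)] = x_{\alpha+\beta}(N_{\alpha,\beta}\,sr)\cdot(\text{higher-root terms})
$$
lets one read $sr$ off from a group-theoretic commutator; the higher-root contributions in ${\mathbf B}_l,{\mathbf C}_l,{\mathbf F}_4,{\mathbf G}_2$ are controlled by the assumed invertibility of $2$ and $3$. Alternatively, the torus action $h_\beta(u)x_\alpha(t)h_\beta(u)^{-1} = x_\alpha(u^{\langle\alpha,\beta\rangle}t)$ first supplies scalar multiplication by units of $R$, and then the local structure of $R$ extends this to the full product. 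Either route yields $\Gamma$.

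For the reverse direction, the interpretation $\Delta$ of $G(R)$ (or $E(R)$) in $R$ is read off Proposition~\ref{gauss} directly: every group element is encoded as a tuple of $6n+2l$ ring elements coming from its Gauss decomposition $utvu'$, with the torus coordinates living in the definable subset $R^*$. Part~(2) supplies a ring formula defining equality of two such tuples, and part~(3) a ring formula defining their product; thus the domain and the group law are first-order definable in $R$, and $\Delta$ is absolute.

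Finally I would verify the two isomorphism conditions. The composite $\Delta\circ\Gamma$ presents $R$ inside $R$ as the set of those Gauss tuples in which only the $X_\alpha$-coordinate is non-zero, and the map sending $t$ to this distinguished tuple is patently first-order definable in $R$. The composite $\Gamma\circ\Delta$ sends each $g\in G(R)$ to a tuple of root-subgroup elements encoding the Gauss coefficients of $g$, and the sought isomorphism $G(R)\to(\Gamma\circ\Delta)(G(R))$ is defined by a formula asserting, roughly, that $g$ equals the product of the specified root-subgroup and torus elements. The main obstacle I anticipate is ensuring that this last formula is genuinely uniform in $\overline{x}$: the Gauss coordinates of $g$ are accessible only through the definable copies of $U$, $V$, $H$ and $X_\alpha$ supplied by Theorem~\ref{th_define_rings}, and the torus coordinates additionally have to be singled out as units inside the interpreted ring. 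Once this uniformity is in place, both isomorphism formulas can be assembled from the decomposition formulas of Proposition~\ref{gauss}, completing the absolute bi-interpretability of $(G(R),\overline{x})$ with $R$.
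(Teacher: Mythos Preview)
Your proposal is correct and follows essentially the same six-step skeleton as the paper: reduce $G\to E\to E_{\ad}$ absolutely, interpret $R$ inside $(E_{\ad},\overline x)$ via a root subgroup $X_\alpha$ with multiplication extracted from the commutator formula (the paper simply cites Segal--Tent here), interpret $G(R)$ back in $R$ via the Gauss-decomposition formulas of Proposition~\ref{gauss}, and use those same formulas to define the two required isomorphisms. Your discussion of the uniformity issue for the torus coordinates is in fact more explicit than the paper's own treatment, which dispatches Steps~5--6 in one line each.
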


\subsection{Definability of parameters}\leavevmode

In this section we are going to show that our parameters $\overline x=\{ x_\alpha (1)\mid \alpha \in \Phi\}$ are definable up to an automorphism of the Chevalley group $E_{\ad} (\Phi,R)$.

What do we know about these parameters?

1. $x_\alpha(1)$ and $x_\beta(1)$ are conjugate, if $|\alpha|=|\beta|$ and we know the order of some possible conjugating element.

2. We know all commutator relations between parameters.

3. We know that the centres of centralizers  of each $x_\alpha(1)$ with operations $\oplus$ and $\otimes$ introduced in the previous section are local rings (let us call them $X_\alpha (\overline x)$) isomorphic to each other.

4. Any element of the whole group is represented as the product $utvu'$, where $u,t,v,u'$ are defined trough~$X_\alpha$ as in Proposition~\ref{gauss}, the equality of two such representations is defined as in Proposition~\ref{gauss}, multiplication of two such elements is defined as in Proposition~\ref{gauss}.

All together properties 1--4 can be expressed by a first order formula $\Phi(\overline x)$.
In fact it means that there exists a ring $R'=(X_\alpha,\oplus,\otimes)$ such that $E_{\ad} (\Phi,R)\cong E_{\ad} (\Phi,R')$. By \cite{Bunina_Chev_local_aut} an isomorphism is the composition of a ring isomorphism (where $x_\alpha(t)\mapsto x_\alpha(t')$ for all $\alpha\in \Phi$ and $t\in R$) and some automorphism of the initial group $E_{\ad} (\Phi,R)$. This means that $R'\cong R$ and any parameters $\overline t$ satisfied the formula~$\Phi$ up to an automorphism of the group $E_{\ad} (\Phi,R)$ have the form $\overline x=\{ x_\alpha (1)\mid \alpha \in \Phi\}$, what was required.

Therefore bi-interpretability of the  Chevalley group $E_{\ad} (\Phi,R)$ is regular.

Finally we proved our main theorem:

\medskip 

\emph{
If  $G(R)=G_\pi (\Phi,R)$ $(E(R)=E_{\pi}(\Phi, R))$ is an (elementary)  Chevalley group of rank $> 1$, $R$ is a local ring  (with $\frac{1}{2}$ for the root systems ${\mathbf B}_l, {\mathbf C}_l, {\mathbf F}_4, {\mathbf G}_2$ and with  $\frac{1}{3}$ for  ${\mathbf G}_{2})$, then the group $G(R)$ (or $(E(R)$) is regularly bi-interpretable with~$R$.
}

\subsection{Elementary definability of Chevalley groups}\leavevmode

In this section we will show that regular bi-interpretability implies elementary definability.

\begin{proposition}\label{prop-regular->def}
If for some class of rings $\mathcal R$, closed under elementary equivalence,  and the class $\mathcal G =\{ G(R)\mid R\in \mathcal R\}$ (where $G(R)$ is any type of derivative groups: linear groups, Chevalley groups, automorphism groups, etc.) all the groups $G(R)$ are regularly interpretable with the corresponding rings~$R$ with the same interpretations, then the class $\mathcal G$ is elementarily definable in class of all groups, i.\,e., for any group $H$ such that $H\equiv G(R)$ for some $R\in \mathcal R$ there exists a ring $R'\equiv R$ such that $H\cong G(R')$.
\end{proposition}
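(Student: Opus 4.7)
The plan is to transfer the regular bi-interpretability machinery from $G(R)$ to the arbitrary model $H$ and read off from this transfer both a ring $R' \equiv R$ and an isomorphism $H \cong G(R')$. By regular bi-interpretability (applied uniformly to every $R \in \mathcal R$ with the \emph{same} formulas), we have $\emptyset$-definable parameter sets $\varphi, \psi$ for the two interpretations $\Gamma$ and $\Delta$, and a formula $\sigma(\bar v, x, \bar w)$ in the group language such that whenever $q^*$ satisfies $\psi^*$ in $G(R)$, the formula $\sigma(\bar v, x, q^*)$ defines an isomorphism $(\Delta \circ \Gamma)(G(R), q^*) \to G(R)$.

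First, I would observe that $\exists \bar v\, \psi^*(\bar v)$ is true in $G(R)$ (because the bi-interpretability is witnessed there), hence true in $H$, so I may pick $q^* \in \psi^*(H)$. From $q^*$ I extract the tuple $p$ needed by $\Gamma$ and define $R' := \Gamma(H, p)$. The axioms of commutative ring, translated via $\Gamma$, hold uniformly on $\varphi(G(R))$; this is a first-order statement in the group language, so it is inherited by $H$ and $R'$ is genuinely a ring.

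Next I would verify $R' \equiv R$. For any ring sentence $\theta$, let $\theta^*(\bar p)$ be its translation via $\Gamma$, so that $\Gamma(M, p) \vDash \theta$ iff $M \vDash \theta^*(p)$. Regularity of $\Gamma$ means $\Gamma(G(R), p) \cong R$ for every $p \in \varphi(G(R))$, so the group sentence
\[
\forall \bar p\, \bigl(\varphi(\bar p) \to \theta^*(\bar p)\bigr)
\]
is equivalent to $R \vDash \theta$. Since this group sentence holds in $G(R)$ and $H \equiv G(R)$, it holds in $H$, whence $R' = \Gamma(H, p) \vDash \theta$. Therefore $R' \equiv R$, and by closure of $\mathcal R$ under elementary equivalence, $R' \in \mathcal R$; in particular $G(R') = \Delta(R', q')$ is defined for any $q'$ satisfying the parameter formula in $R'$.

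Finally, the first-order sentence asserting that for every $q^*$ with $\psi^*(q^*)$, the formula $\sigma(\cdot, x, q^*)$ defines an isomorphism $(\Delta \circ \Gamma)(\mathcal B, q^*) \to \mathcal B$ holds in $G(R)$, hence in $H$. Applied to my chosen $q^*$, and decoding $(\Delta \circ \Gamma)(H, q^*) = \Delta(\Gamma(H, p), q) = \Delta(R', q)$ for the $q$ packaged inside $q^*$, this yields $\Delta(R', q) \cong H$. Regularity of $\Delta$ on $R'$ gives $\Delta(R', q) \cong G(R')$, so $H \cong G(R')$ with $R' \equiv R$, as required. The only real obstacle is purely bookkeeping: to verify that the single tuple $q^*$ correctly packages the two layers of parameters consumed by the composed interpretation $\Delta \circ \Gamma$, and that the $q$ extracted from it in fact lies in the $\Delta$-parameter set of $R'$ — both of which are built into the definition of $\psi^*$ in regular bi-interpretability.
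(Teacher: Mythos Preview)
Your proof is correct and follows essentially the same route as the paper's: pick parameters in $H$ using the $\emptyset$-definable parameter formula, build $R'=\Gamma(H,p)$, argue $R'\equiv R$ (hence $R'\in\mathcal R$), and then transfer to $H$ the first-order sentence asserting that the formula $\sigma(\cdot,x,q^*)$ defines an isomorphism $(\Delta\circ\Gamma)(\cdot,q^*)\to(\cdot)$, concluding $H\cong\Delta(R',q)\cong G(R')$. Your write-up is in fact slightly more careful than the paper's on two points---you spell out why $R'\equiv R$ via the translation $\forall\bar p\,(\varphi(\bar p)\to\theta^*(\bar p))$, and you flag the bookkeeping that $q^*$ must package the parameters for both layers of $\Delta\circ\Gamma$---but the underlying argument is the same.
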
 

\begin{proof}
Let $H$ be some group such that $H\equiv G(R)$, $R\in \mathcal R$. Let $\varphi (x_1,\dots, x_m)$ be the formula defining  parameters $y_1,\dots, y_m$, required for bi-interpretation of $R$ in $G(R)$ (recall that these parameters are defined up to some automorphism of $G(R)$). The formula $\varphi(x_1,\dots, x_m)$ defines in~$H$ some parameters $h_1,\dots, h_m$, let us fix them.

Let now $R \cong \Gamma(G(R), \varphi)$ be a code interpreting $R$ in $G(R)$. Let us apply the same code to the group~$H$, we will obtain some ring $R'$, elementarily equivalent to~$R$.

Since $R'\equiv R$, then $R'\in \mathcal R$, so that $G(R')\in \mathcal G$. It means that the code $\Gamma(\cdot, \varphi)$ also defines in $G(R')$ the ring~$R'$, and then a code $\Delta (\cdot, \psi)$, which interprets $G(R)$ in~$R$, also interprets $G(R')$ in~$R'$.  
 
But there is a formula $\theta (\overline y, x, \overline z)$ in the language of groups
 such that for every tuple~$\overline p$ satisfying $\varphi(\overline z)$ in~$G(R)$ (or in $G(R')$) the formula $\theta (\overline y,x, \overline p)$   defines in~$G(R)$ (in $G(R')$) the isomorphism $\overline \mu_{\Delta \circ \Gamma}:  (\Delta \circ \Gamma)(G(R), \overline p) \to G(R)$ (or $\overline \mu_{\Delta \circ \Gamma}':  (\Delta \circ \Gamma)(G(R'), \overline p') \to G(R')$). 

Since $H\equiv G(R)$, the same formula defines an isomorphism between $(\Delta\circ \Gamma)(H, \overline p'')$ and $H$, where $\overline p''$ is defined in~$H$ by the formula~$\varphi$. 

But the code $\Gamma(\cdot , \varphi)$ defines in $H$ a ring isomorphic to~$R'$, therefore this isomorphism is between a group, constructed by~$R'$ according to interpretation code~$\Delta$, and the group~$H$. We know that $\Delta(R')\cong G(R')$, consequently, $G(R')\cong H$, as required.

\end{proof}

\begin{corollary}
The class of Chevalley groups over local rings is elementarily definable, i.\,e., if $G(R)=G_\pi (\Phi,R)$ is a Chevalley group of rank $> 1$, $R$ over a  local ring $R$  (with $\frac{1}{2}$ for the root systems ${\mathbf A}_2, {\mathbf B}_l, {\mathbf C}_l, {\mathbf F}_4, {\mathbf G}_2$ and with  $\frac{1}{3}$ for  ${\mathbf G}_{2})$  and for an arbitrary group $H$ we have $H\equiv G(R)$, then $H\equiv G_\pi(\Phi,R')$ for some local ring~$R'$, which is elementarily equivalent to~$R$.
\end{corollary}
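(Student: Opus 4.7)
The plan is to deduce the corollary directly from Theorem~\ref{th-reg-bi-interpret} together with Proposition~\ref{prop-regular->def}. The only thing that really needs to be checked is the hypothesis of Proposition~\ref{prop-regular->def}: that the class $\mathcal R$ of local rings with the prescribed invertibility conditions on~$2$ (and on~$3$ for~${\mathbf G}_2$) is closed under elementary equivalence.

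The first step is to observe that $\mathcal R$ is in fact first-order axiomatizable in the language of commutative rings. Commutativity and the axioms of a ring with $1$ are obviously expressible; locality of a commutative ring with~$1$ is captured by $\forall x\,\exists y\,(xy=1 \vee (1-x)y=1)$, i.e.\ either $x$ or $1-x$ is a unit; and the invertibility of $2$ or $3$ is captured by $\exists y\,(2y=1)$ or $\exists y\,(3y=1)$. Consequently $\mathcal R$ is an elementary class, and any ring elementarily equivalent to a member of $\mathcal R$ lies again in $\mathcal R$.

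The second step is to invoke Theorem~\ref{th-reg-bi-interpret}, which provides a regular bi-interpretation of $G_\pi(\Phi,R)$ with $R$ by means of interpretation codes $\Gamma,\Delta$ and formulas $\varphi^*,\psi^*,\theta,\sigma$ whose constructions in Sections~3.1--3.3 depend only on the root system~$\Phi$ and not on the individual ring $R\in \mathcal R$: the parameter-defining formula from Section~3.3 is written purely in group-theoretic terms involving~$\Phi$, the Gauss-decomposition machinery of Proposition~\ref{gauss} is uniform in~$R$, and the identification of~$R$ with the centre of the centralizer of $x_{\alpha}(1)$ uses only ring axioms that hold throughout~$\mathcal R$. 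This uniformity is precisely the hypothesis of Proposition~\ref{prop-regular->def} applied to $\mathcal G=\{G_\pi(\Phi,R): R\in \mathcal R\}$ (and, analogously, to the class of elementary Chevalley groups).

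Finally, given an arbitrary group~$H$ with $H\equiv G_\pi(\Phi,R)$, Proposition~\ref{prop-regular->def} produces a ring $R'\equiv R$ with $H\cong G_\pi(\Phi,R')$; the closure of~$\mathcal R$ under~$\equiv$ established in the first step guarantees that $R'$ is itself a local ring with the required units, so $G_\pi(\Phi,R')$ is indeed a Chevalley group of the form demanded by the corollary, and in particular $H\equiv G_\pi(\Phi,R')$. I do not anticipate any genuine obstacle: the heavy lifting has been done in Theorem~\ref{th-reg-bi-interpret} and Proposition~\ref{prop-regular->def}. The only point that would need attention is to confirm that every formula appearing in the three steps of Section~3 is written without reference to any specific element of $R$ other than the fixed generators $x_\alpha(1)$ (which are exactly the parameters defined up to automorphism in Section~3.3); this uniformity is manifest in those constructions.
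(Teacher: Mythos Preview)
Your argument is correct and follows the same route as the paper: apply Theorem~\ref{th-reg-bi-interpret} together with Proposition~\ref{prop-regular->def} to the class $\mathcal G=\{G_\pi(\Phi,R):R\in\mathcal R\}$ for the fixed data $(\Phi,\pi)$. You are in fact a bit more explicit than the paper in checking the hypothesis of Proposition~\ref{prop-regular->def}, namely that $\mathcal R$ is closed under elementary equivalence; conversely, the paper adds one sentence you omit, citing \cite{Bunina-local} to note that elementarily equivalent Chevalley groups must share the same $\Phi$ and~$\pi$, which justifies working with a single fixed type---but since the ``i.e.'' clause of the corollary already fixes $\Phi$ and~$\pi$, your version needs nothing further.
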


\begin{proof}
By the results of~\cite{Bunina-local}  Chevalley groups of different types  (over infinite rings) are not elementarily equivalent, i.\,e., they cannot have different root systems or not isomorphic weight lattices. Therefore we can suppose that we consider the class $\mathcal G=\{ G(R) =G_\pi (\Phi, R)\mid R\text{ is a local ring }\}$, where $\Phi$ and $\pi$ are fixed.

Since by Theorem~\ref{th-reg-bi-interpret} the class $\mathcal G$ satisfies the conditions of Proposition~\ref{prop-regular->def}, then by this proposition it is elementarily definable.
\end{proof}

\medskip

{\bf Acknowledgements.}
Our sincere thanks go to Eugene Plotkin for very useful discussions regarding various aspects of this work and permanent attention to it.

\bigskip

\end{document}